\newtheorem{thm}{Theorem}[section]
\newtheorem*{thm*}{Theorem}
\newtheorem{lem}[thm]{Lemma}
\newtheorem{cor}[thm]{Corollary}
\newtheorem{prop}[thm]{Proposition}
\theoremstyle{definition}
\newtheorem{defn}[thm]{Definition}
\newtheorem*{notn*}{Notation}
\newtheorem*{hyp*}{Hypothesis}
\newtheorem{rem}[thm]{Remark}
\newtheorem*{rem*}{Remark}
\newtheorem{rems}[thm]{Remarks}
\numberwithin{equation}{section}
\newcommand{\secref}[1]{Section~\textup{\ref{#1}}}
\newcommand{\thmref}[1]{Theorem~\textup{\ref{#1}}}
\newcommand{\corref}[1]{Corollary~\textup{\ref{#1}}}
\newcommand{\lemref}[1]{Lemma~\textup{\ref{#1}}}
\newcommand{\propref}[1]{Proposition~\textup{\ref{#1}}}
\newcommand{\remref}[1]{Remark~\textup{\ref{#1}}}
\newcommand{\midtext}[1]{\quad\text{#1}\quad}
\renewcommand{\and}{\midtext{and}}
\newcommand{\Z}{\mathbb Z}
\newcommand{\T}{\mathbb T}
\newcommand{\KK}{\mathcal K}
\newcommand{\LL}{\mathcal L}
\newcommand{\OO}{\mathcal O}
\renewcommand{\epsilon}{\varepsilon}
\DeclareMathOperator{\ad}{Ad}
\DeclareMathOperator*{\spn}{span}
\DeclareMathOperator*{\clspn}{\overline{\spn}}
\newcommand{\id}{\text{\textup{id}}}
\newcommand{\<}{\langle}
\renewcommand{\>}{\rangle}
\newcommand{\inv}{^{-1}}
\newcommand{\iso}{\overset{\cong}{\longrightarrow}}
\renewcommand{\bar}{\overline}
\newcommand{\what}{\widehat}
\newcommand{\rt}{\textup{rt}}
\newcommand{\cs}%
{\ensuremath{\mathbf{C^*}}}
\newcommand{\csnd}%
{\ensuremath{\cs\!\!_\mathbf{nd}}}
\newcommand{\coact}%
{\ensuremath{\mathbf{C^*coact}}}
\newcommand{\coactnd}%
{\ensuremath{\coact_\mathbf{nd}}}
\newcommand{\coactn}%
{\ensuremath{\coact^\mathbf{n}}}
\newcommand{\coactnnd}%
{\ensuremath{\coactn_\mathbf{nd}}}
\newcommand{\coactm}%
{\ensuremath{\coact^\mathbf{m}}}
\newcommand{\coactmnd}%
{\ensuremath{\coactm_\mathbf{nd}}}
\newcommand{\cpct}[1]{#1^{(1)}}
\newcommand{\ideal}[2]{M(#1;#2)}
\newcommand{\cg}{C^*(G)}
\begin{document}
\title{Coactions on Cuntz-Pimsner algebras}
\author[Kaliszewski, Quigg, and Robertson]
{S.~Kaliszewski, John Quigg, and David Robertson}
\address [S.~Kaliszewski] {School of Mathematical and Statistical Sciences, Arizona State University, Tempe, Arizona 85287} \email{kaliszewski@asu.edu}\
\address[John Quigg]{School of Mathematical and Statistical Sciences, Arizona State University, Tempe, Arizona 85287} \email{quigg@asu.edu}
\address[David Robertson]{School of Mathematics and Applied Statistics, University of Wollongong, NSW 2522, AUSTRALIA} \email{droberts@uow.edu.au}

\date{\today}

\begin{abstract}
We investigate how a correspondence coaction gives rise to a coaction on the associated Cuntz-Pimsner algebra.
We apply this to recover a recent result of Hao and Ng concerning Cuntz-Pimsner algebras of crossed products of correspondences by actions of amenable groups.
\end{abstract}

\subjclass[2010]{Primary 46L08; Secondary 46L55}
\keywords {Hilbert module, $C^*$-correspondence, Cuntz-Pimsner algebra, coaction}

\maketitle

\section{Introduction}\label{intro}

The Cuntz-Pimsner algebra $\OO_X$ associated to a $C^*$-correspondence $X$ is a $C^*$-algebra whose representations encode the Cuntz-Pimsner covariant representations of $X$.
These were introduced by Pimsner in \cite{Pi}, and generalize both crossed products by $\Z$ and graph algebras when the underlying graph has no sources. Further work by Katsura in \cite{KatsuraCorrespondence} has expanded the class of Cuntz-Pimsner algebras to include graph algebras of arbitrary graphs, crossed products by partial automorphisms and topological graph algebras.

As in the cases of the above mentioned $C^*$-algebras, it is fruitful to investigate how $C^*$-constructions involving $\OO_X$ can be studied in terms of corresponding constructions involving $X$.
For example, it has been understood for some time how actions of groups on $\OO_X$ can be studied in terms of actions on $X$, see \cite{HN} for example.
In this paper we show how coactions of a locally compact group $G$ on $\OO_X$ can be studied in terms of suitable coactions of $G$ on $X$.

In order to say what ``suitable'' should mean, we appeal to \cite{KQRCorrespondenceFunctor}, where we showed that the passage from $X$ to $\OO_X$ is functorial for certain categories.
Specifically, the target category is $C^*$-algebras and nondegenerate homomorphisms into multiplier algebras, and the domain category is correspondence and \emph{Cuntz-Pimsner covariant homomorphisms} (defined in \cite{KQRCorrespondenceFunctor}).
To see how this should be applied, note that a coaction of $G$ on $\OO_X$ is a nondegenerate homomorphism $\zeta:\OO_X\to M(\OO_X\otimes C^*(G))$ satisfying appropriate conditions, and similarly a coaction of $G$ on $X$ (as defined in \cite{enchilada}) is a homomorphism $\sigma:X\to M(X\otimes C^*(G))$.
In order to apply the techniques from \cite{KQRCorrespondenceFunctor}, we want $\zeta$ to be determined by $\sigma$.
If we knew that $\OO_X\otimes C^*(G)$ were equal to $\OO_{X\otimes C^*(G)}$, the Cuntz-Pimsner algebra of the external-tensor-product correspondence, then the main result of \cite{KQRCorrespondenceFunctor} would tell us that 
we should require the correspondence homomorphism $\sigma$ to be \emph{Cuntz-Pimsner covariant} in the sense defined there.
As it happens, 
due to the nonexactness of minimal $C^*$-tensor products,
we need a slightly stronger version of Cuntz-Pimsner covariance, specifically suited for correspondence coactions.
We work this out in an abstract setting toward the end of \secref{prelim},
then we use this to prove out main result concerning coactions on Cuntz-Pimsner algebras
at the start of \secref{coactions}, after which we go on to develop a few tools dealing with inner coactions on correspondences. 

In \secref{crossed products} we show how to recognize covariant representations of the coaction $\zeta$ on $\OO_X$ using the coaction $\sigma$ on $X$.
In \thmref{crossed} we show that under a mild technical condition the crossed product $\OO_X\rtimes_\zeta G$ is isomorphic to the Cuntz-Pimsner algebra $\OO_{X\rtimes_\sigma G}$ of the crossed-product correspondence. We 
list in \lemref{implies CP} a couple of situations in which the technical condition is guaranteed to hold.
We also show that, as in the $C^*$-case, the crossed product of $X$ by an inner coaction is isomorphic to the tensor product $X\otimes C_0(G)$,
and that if $G$ is amenable and acts on $X$ then the dual coaction on the crossed product $X\rtimes G$ 
satisfies our stronger version of Cuntz-Pimsner covariance. For all we know the amenability hypothesis in the latter result is unnecessary, but anyway we will apply this 
in \secref{apps} to recover a recent result of Hao and Ng \cite{HN}; they show that if $G$ acts on $X$ then $\OO_X\rtimes G\cong \OO_{X\rtimes G}$, and we give a substantially different proof using the techniques of the present paper.

\section{Preliminaries}\label{prelim}

We are mainly interested in correspondences over a single coefficient $C^*$-algebra, but occasionally we will find it convenient to allow the left and right coefficient $C^*$-algebras to be different.
We denote an $A-B$ correspondence $X$ by $(A,X,B)$ and write $\phi_A: A \to \LL(X)$ for the left action of $A$ on $X$.
If $A=B$ we denote the $A$-correspondence $X$ by $(X,A)$.
All correspondences will be assumed \emph{nondegenerate} in the sense that $A\cdot X=X$.\footnote{Warning: in \cite{enchilada} the definition of ``right-Hilbert bimodule'' includes the nondegeneracy hypothesis.}
We record here the notation and results that we will need.

The \emph{multiplier correspondence} of a correspondence $(A,X,B)$ is
$M(X):=\LL_B(B,X)$,
which is an $M(A)-M(B)$ correspondence in a natural way.
If $(A,X,B)$ and $(C,Y,D)$ are correspondences, a \emph{correspondence homomorphism}
$(\pi,\psi,\rho):(A,X,B)\to (M(C),M(Y),M(D))$
comprises homomorphisms $\pi:A\to M(C)$ and $\rho:B\to M(D)$ and a linear map $\psi:X\to M(Y)$
preserving the correspondence operations.
The homomorphism $(\pi,\psi,\rho)$ is \emph{nondegenerate} if $\clspn\{\psi(X)\cdot D\}=Y$ and both $\pi$ and $\rho$ are nondegenerate,
and then there is a unique strictly continuous extension
$(\bar\pi,\bar\psi,\bar\rho):(M(A),M(X),M(B))\to (M(C),M(Y),M(D))$,
and also a unique nondegenerate homomorphism
$\psi^{(1)}:\KK(X)\to \LL(Y)$
such that
$(\psi^{(1)},\psi,\rho):(\KK(X),X,B)\to (\LL(Y),M(Y),M(D))$
is a nondegenerate correspondence homomorphism.
The diagram
\begin{equation}\label{nondegenerate commute}
\xymatrix{
A \ar[r]^-\pi \ar[d]_{\varphi_A}
&M(B) \ar[d]^{\bar{\varphi_B}}
\\
\LL(X) \ar[r]_-{\bar{\psi^{(1)}}}
&\LL(Y)
}
\end{equation}
commutes, and $\psi^{(1)}$ is determined by
$\psi^{(1)}(\theta_{\xi,\eta})=\psi(\xi)\psi(\eta)^*$.

If $A=B$, $C=D$, and $\pi=\rho$, we write
$(\psi,\pi):(X,A)\to (M(Y),M(C))$.

We refer to \cite[Section~2]{KQRCorrespondenceFunctor} for an exposition of the properties of  the ``relative multipliers'' from \cite[Appendix~A]{dkq}.
Very briefly, 
if $(X,A)$ is a nondegenerate correspondence and $\kappa:C\to M(A)$ is a nondegenerate homomorphism, the
\emph{$C$-multipliers} of $X$ are
\[
M_C(X):=\{m\in M(X):\kappa(C)\cdot m\cup m\cdot \kappa(C)\subset X\}.
\]
The main purpose of relative multipliers is the following extension theorem \cite[Proposition~A.11]{dkq}:
let $X$ and $Y$ be nondegenerate correspondences over $A$ and $B$, respectively,
let $\kappa:C\to M(A)$ and
$\sigma:D\to M(B)$
be nondegenerate homomorphisms.
If there is a nondegenerate homomorphism $\lambda:C\to M(\sigma(D))$
such that
\[
\pi(\kappa(c)a)=\lambda(c)\pi(a)\midtext{for}c\in C,a\in A,
\]
then for any correspondence homomorphism $(\psi,\pi):(X,A)\to (M_D(Y),M_D(B))$ there is a unique $C$-strict to $D$-strictly continuous correspondence homomorphism $(\bar\psi,\bar\pi)$ making the diagram
\[
\xymatrix{
(X,A) \ar[r]^-{(\psi,\pi)} \ar@{^(->}[d]
&(M_D(Y),M_D(B))
\\
(M_C(X),M_C(A)) \ar@{-->}[ur]_{(\bar\psi,\bar\pi)}^{!}
}
\]
commute.

We will also need to use the method of \cite{KQRCorrespondenceFunctor} to construct homomorphisms of Cuntz-Pimsner algebras from correspondence homomorphisms:
a homomorphism $(\psi,\pi):(X,A)\to (M(Y),M(B))$ is \emph{Cuntz-Pimsner covariant} if
\begin{enumerate}
\item $\psi(X)\subset M_B(Y)$,

\item $\pi:A\to M(B)$ is nondegenerate,

\item $\pi(J_X)\subset \ideal{B}{J_Y}$, and

\item the diagram
\begin{equation}\label{CP diagram}
\xymatrix{
J_X \ar[r]^-{\pi|} \ar[d]_{\varphi_A|}
&\ideal{B}{J_Y} \ar[d]^{\bar{\varphi_B}\bigm|}
\\
\KK(X) \ar[r]_-{\cpct\psi}
&M_B(\KK(Y))
}
\end{equation}
commutes,
\end{enumerate}
where, for an ideal $I$ of a $C^*$-algebra $A$, we follow \cite{BaajSkandalis} by defining
\[
\ideal{A}{I}=\{m\in M(A):mA\cup Am\subset I\}.
\]
By \cite[Corollary~3.6]{KQRCorrespondenceFunctor}, when $(\psi,\pi)$ is Cuntz-Pimsner covariant there is a unique homomorphism $\OO_{\psi,\pi}$ making the diagram
\[
\xymatrix{
X \ar[r]^-{(\psi,\pi)} \ar[d]_{k_X}
&M_B(Y) \ar[d]^{\bar{k_Y}}
\\
\OO_X \ar[r]_-{\OO_{\psi,\pi}}
&M_B(\OO_Y)
}
\]
commute.

If $G$ is a locally compact group and
$(X,A)$ is a correspondence we will write
\begin{align*}
M_{\cg}(A\otimes \cg)&=M_{1\otimes \cg}(A\otimes \cg)\\
M_{\cg}(X\otimes \cg)&=M_{1\otimes \cg}(X\otimes \cg).
\end{align*}

Recall that a \emph{coaction} of $G$ on a $C^*$-algebra $A$ is a 
nondegenerate 
injective homomorphism $\delta:A\to M(A\otimes \cg)$ satisfying the \emph{coaction identity} given by the commutative diagram
\begin{equation}\label{coaction diagram}
\xymatrix@C+30pt{
A \ar[r]^-\delta \ar[d]_\delta
&M(A\otimes \cg) \ar[d]^{\bar{\delta\otimes\id}}
\\
M(A\otimes \cg) \ar[r]_-{\bar{\id\otimes\delta_G}}
&M(A\otimes \cg\otimes \cg),
}
\end{equation}
and satisfying the \emph{coaction-nondegeneracy} condition
\[
\clspn\{\delta(A)(1\otimes \cg)\}=A\otimes \cg.
\]

\begin{rems}
(1)
Note that, as has become customary in recent years, we have built coaction-nondegeneracy into the definition of coaction, and of course it follows that $\delta(A)\subset M_{\cg}(A\otimes \cg)$.

(2)
The coaction identity requires $\delta$ to be nondegenerate as a homomorphism, so that it extends uniquely to multipliers.\footnote{However, if we know that $\delta(A)\subset M_{\cg}(A\otimes \cg)$, then, even without knowing $\delta$ is nondegenerate, the coaction identity makes sense when the upper right and lower left corners of the commutative diagram \eqref{coaction diagram} are replaced by $M_{\cg}(A\otimes \cg)$.}

(3)
Coaction-nondegeneracy implies nondegeneracy as a homomorphism.
However, an under-appreciated result of Katayama \cite[Lemma~4]{kat},
implies that, assuming we know $\delta$ satisfies all the other coaction axioms except for coaction-nondegeneracy, the closed span of the products $\delta(A)(1\otimes \cg)$ is actually a $C^*$-subalgebra of $A\otimes \cg$,
and hence to show coaction-nondegeneracy it suffices to verify 
the seemingly weaker condition
\begin{equation}\label{katayama}
\text{$\delta(A)(1\otimes \cg)$ generates 
$A\otimes \cg$ as a $C^*$-algebra.}
\end{equation}
\end{rems}

A nondegenerate homomorphism
$\mu:C_0(G)\to M(A)$ implements an \emph{inner coaction} $\delta^\mu$ on $A$ via
\[
\delta^\mu(a)=\ad\bar{\mu\otimes\id}(w_G)(a\otimes 1),
\]
where
\[
w_G\in M(C_0(G)\otimes \cg)=C_b(G,M^\beta(C^*(G)))
\]
is the function given by the canonical embedding of $G$ into the unitary group of $M(C^*(G))$.
The \emph{trivial coaction} $\delta^1=\id_A\otimes 1$ on $A$ is implemented by the homomorphism
\[
f\mapsto f(e)1_{M(A)}\midtext{for}f\in C_0(G).
\]

A coaction $(A,\delta)$ makes $A$ into a Banach module over the Fourier-Stieltjes algebra $B(G)=C^*(G)^*$ via
\[
f\cdot a=S_f\circ\delta(a)\midtext{for}f\in B(G),a\in A,
\]
where $S_f:A\otimes C^*(G)\to A$ is the slice map, which we sometimes alternatively denote by $\id\otimes f$.
Frequently we restrict 
the module action to the Fourier algebra $A(G)$, which is dense in $C_0(G)$.

The \emph{Kronecker product} (see, e.g., \cite[Th\'eor\`eme~1.5]{DeCanniereEnockSchwartz}, \cite[p. 118]{KirchbergHopf}, \cite[Definition~A.2]{NakagamiTakesaki}, \cite[Definition~6.6]{QuiggDualityTwisted}) of two nondegenerate homomorphisms $\mu$ and $\nu$ of $C_0(G)$ in $M(A)$ and $M(B)$, respectively, is defined by
\[
\mu\times\nu:=\bar{\mu\otimes\nu}\circ\alpha,
\]
where $\alpha:C_0(G)\to C_b(G\times G)=M(C_0(G)\otimes C_0(G))$ is given by
\[
\alpha(f)(s,t)=f(st).
\]
Letting
\[
u=\bar{\mu\otimes\id}(w_G)\midtext{and}v=\bar{\nu\otimes\id}(w_G),
\]
we have
\[
\bar{(\mu\times\nu)\otimes\id}(w_G)=u_{13}v_{23}.
\]

A \emph{covariant homomorphism} of a coaction $(A,\delta)$
is a pair
$(\pi,\mu):(A,C_0(G))\to M(B)$ comprising nondegenerate homomorphisms $\pi:A\to M(B)$ and $\mu:C_0(G)\to M(B)$ such that
\[
\bar{\pi\otimes\id}\circ\delta(a)=\ad\bar{\mu\otimes\id}(w_G)(\pi(a)\otimes 1).
\]
A \emph{crossed product} of $(A,\delta)$ is a triple $(A\rtimes_\delta G,j_A,j_G)$ consisting of a covariant homomorphism $(j_A,j_G):(A,C_0(G))\to M(A\rtimes_\delta G)$ that is \emph{universal} in the sense that for every covariant homomorphism $(\pi,\mu):(A,C_0(G))\to M(B)$ there is a unique nondegenerate homomorphism $\pi\times\mu:A\rtimes_\delta G\to M(B)$ making the diagram
\[
\xymatrix{
A \ar[r]^-{j_A} \ar[dr]_\pi
&M(A\rtimes_\delta G) \ar@{-->}[d]^{\pi\times\mu}_{!}
&C_0(G) \ar[l]_-{j_G} \ar[dl]^\mu
\\
&M(B)
}
\]
commute.
It follows that $A\rtimes_\delta G=\clspn\{j(A)j_G(C_0(G))$.
The crossed product is unique up to isomorphism,
and one construction is given by the \emph{regular representation}
\[
\bigl((\id\otimes\lambda)\circ\delta,1\otimes M\bigr):
(A,C_0(G))\to M(A\otimes \KK(L^2(G))),
\]
where $\lambda$ is the left regular representation of $G$ and
$M:C_0(G)\to B(L^2(G))$ is the multiplication representation.

For correspondence coactions, we follow
\cite{enchilada}, 
but again build in coaction-nondegeneracy:

\begin{defn}
A \emph{coaction} of $G$ on a correspondence $(A,X,B)$ is a 
nondegenerate 
correspondence homomorphism
\[
(\delta,\sigma,\epsilon):(A,X,B)\to \bigl(M(A\otimes \cg),M(X\otimes \cg),M(B\otimes \cg)\bigr)
\]
such that:
\begin{enumerate}
\item $\delta$ and $\epsilon$ are coactions on $A$ and $B$, respectively;

\item $\sigma$ satisfies the coaction identity given by the commutative diagram
\[
\xymatrix@C+30pt{
X \ar[r]^-\sigma \ar[d]_\sigma
&M(X\otimes \cg) \ar[d]^{\bar{\sigma\otimes\id}}
\\
M(X\otimes \cg) \ar[r]_-{\bar{\id\otimes\delta_G}}
&M(X\otimes \cg\otimes \cg);
}
\]

\item $\sigma$ satisfies the coaction-nondegeneracy condition
\[
\clspn\{(1\otimes \cg)\cdot \sigma(X)\}=\clspn\{\sigma(X)\cdot (1\otimes \cg)\}=X\otimes \cg.
\]
\end{enumerate}
We also say that $\sigma$ is \emph{$\delta-\epsilon$ compatible}.
\end{defn}

\begin{rems}
(1)
Remarks similar to those following the definition of $C^*$-coaction apply to correspondence coactions. For example,
coaction-nondegeneracy implies that $\sigma(X)\subset M_{\cg}(X\otimes \cg)$ and $\sigma$ is nondegenerate as a correspondence homomorphism.
In fact, it implies a stronger form of nondegeneracy, namely that, in addition to $\clspn\{\sigma(X)\cdot (X\otimes \cg)\}=X\otimes \cg$, we also have the symmetric property on the other side:
\[
\clspn\{(X\otimes \cg)\cdot \sigma(X)\}=X\otimes \cg.
\]

(2)
On the other hand, nondegeneracy of $\sigma$ as a correspondence homomorphism implies one half of the coaction-nondegeneracy, namely $\clspn\{\sigma(X)\cdot (1\otimes \cg)\}=X\otimes \cg$, by coaction-nondegeneracy of $\epsilon$.

(3)
$\sigma$ will be isometric since $\epsilon$ is injective.
\end{rems}

Frequently we will have $A=B$ and $\delta=\epsilon$, in which case we say that $(\sigma,\delta)$ is a coaction on $(X,A)$;
of course the case $X=A=B$ and $\sigma=\delta=\epsilon$ reduces to a $C^*$-coaction.
Being particularly nice correspondence homomorphisms, coactions on $C^*$-correspondences are easily shown to be Cuntz-Pimsner covariant:

\begin{lem}\label{coaction CP}
A coaction $(\sigma,\delta)$ of $G$ on a correspondence $(X,A)$ is Cuntz-Pimsner covariant as a correspondence homomorphism if and only if 
\[
\delta(J_X)\subset M\bigl(A\otimes \cg;J_{X\otimes \cg}\bigr).
\]
\end{lem}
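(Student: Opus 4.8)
The plan is to verify conditions (i)--(iv) in the definition of Cuntz-Pimsner covariance for the correspondence homomorphism $(\sigma,\delta)\colon (X,A)\to (M(X\otimes\cg),M(A\otimes\cg))$, treating $(Y,B)=(X\otimes\cg,A\otimes\cg)$. Conditions (i) and (ii) are essentially free: coaction-nondegeneracy gives $\sigma(X)\subset M_{\cg}(X\otimes\cg)=M_B(Y)$ (Remark (1) after the definition of correspondence coaction), and $\delta$ is nondegenerate as a homomorphism by definition of a $C^*$-coaction. So the content is in (iii) and (iv), and these are exactly where the hypothesis $\delta(J_X)\subset M(A\otimes\cg;J_{X\otimes\cg})$ enters.

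For (iii), recall $J_X=\varphi_A^{-1}(\KK(X))\cap(\ker\varphi_A)^\perp$ and similarly for $J_{X\otimes\cg}$. The hypothesis says precisely $\delta(J_X)\subset M(A\otimes\cg;J_{X\otimes\cg})=\ideal{B}{J_Y}$, so (iii) holds \emph{by assumption} --- the lemma is phrased so that (iii) is the one nontrivial condition that cannot be deduced from the coaction axioms alone, while (iv) will turn out to follow automatically. For (iv), I would show the diagram \eqref{CP diagram} commutes by using the defining relation $\sigma^{(1)}(\theta_{\xi,\eta})=\sigma(\xi)\sigma(\eta)^*$ together with the fact that $(\sigma,\delta)$ is a genuine correspondence homomorphism. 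Concretely, for $\theta_{\xi,\eta}\in\KK(X)$ one computes $\bar{\varphi_{A\otimes\cg}}(\delta(\varphi_A^{-1}\text{-preimage}))$ against $\sigma^{(1)}(\theta_{\xi,\eta})=\sigma(\xi)\sigma(\eta)^*$ and checks both routes around the square agree; this reduces, via the commuting square \eqref{nondegenerate commute} applied to the nondegenerate extension of $(\sigma,\delta)$ and the identity $\sigma(a\cdot\xi)=\delta(a)\cdot\sigma(\xi)$ (which holds since $(\sigma,\delta)$ preserves the correspondence operations), to an identity already encoded in $\sigma$ being $\delta$-$\epsilon$ compatible. The key point is that the left action intertwining relation $\sigma\circ\varphi_A(a)=\bar{\varphi_{A\otimes\cg}}(\delta(a))\circ\sigma$ on $X$, restricted to $J_X$, is exactly the statement that the square \eqref{CP diagram} commutes after composing with the injection $X\hookrightarrow M(X\otimes\cg)$; and since the relevant maps into $M_B(\KK(Y))$ are determined by their behaviour on rank-one operators (equivalently on the dense ideal generated by $\theta_{\xi,\eta}$'s), commutativity on this dense set suffices.

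For the converse direction, if $(\sigma,\delta)$ is Cuntz-Pimsner covariant then condition (iii) is literally the assertion $\delta(J_X)\subset\ideal{B}{J_Y}=M(A\otimes\cg;J_{X\otimes\cg})$, so there is nothing to prove. Thus the lemma is really an ``if and only if'' in which the forward direction is the substantive one and the backward direction is a tautology built into how the hypothesis was stated.

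The main obstacle will be the bookkeeping in step (iv): one must be careful that all the maps ($\varphi_A|_{J_X}$, its extension $\bar{\varphi_{A\otimes\cg}}|$, the map $\sigma^{(1)}$ and its target $M_B(\KK(Y))$) are landing in the correct relative-multiplier algebras, and that the strict-continuity arguments justifying ``it suffices to check on $\theta_{\xi,\eta}$'' are valid in $M_B(\KK(Y))$ rather than in $\LL(Y)$ or $M(\KK(Y))$. I expect this to be routine given the extension machinery from \cite{dkq} and \cite{KQRCorrespondenceFunctor} recalled above, but it is the only place where genuine care is needed; everything else is a direct unwinding of definitions.
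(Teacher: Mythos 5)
Your proposal is correct and takes essentially the same route as the paper, whose one-line proof simply cites \cite[Lemma 3.2]{KQRCorrespondenceFunctor} to encapsulate exactly what you verify by hand: for a nondegenerate correspondence homomorphism with $\psi(X)\subset M_B(Y)$, conditions (i), (ii) and (iv) are automatic (the latter via the commuting square \eqref{nondegenerate commute}), so Cuntz-Pimsner covariance reduces to condition (iii), which is the stated hypothesis on $\delta(J_X)$. One small slip: $M_{\cg}(X\otimes \cg)$ is contained in, not equal to, $M_{A\otimes \cg}(X\otimes \cg)$, but only the containment is needed.
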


\begin{proof}
By definition of correspondence coaction, the correspondence homomorphism $(\sigma,\delta):(X,A)\to (M(X\otimes \cg),M(A\otimes \cg))$ is nondegenerate,
and
the inclusion $\sigma(X)\subset M_{\cg}(X\otimes \cg)$ trivially implies that $\sigma(X)\subset M_{A\otimes \cg}(X\otimes \cg)$.
Combining with \cite[Lemma 3.2]{KQRCorrespondenceFunctor} gives the result.
\end{proof}

However, as consequence of the nonexactness of minimal $C^*$-tensor products, we will need a variation on \lemref{coaction CP}, and we state it in abstract form, not involving coactions:

\begin{lem}\label{tensor}
Let
$(X,A)$ be a correspondence,
let $C$ be a $C^*$-algebra,
and
let $(\psi,\pi):(X,A)\to (M_C(X\otimes C),M_C(A\otimes C))$ be a nondegenerate correspondence homomorphism.
If
\[
\pi(J_X)\subset M(A\otimes C;J_X\otimes C),
\]
then the composition
\[
\Bigl(\bar{k_X\otimes\id}\circ\psi,\bar{k_A\otimes\id}\circ\pi\Bigr):
(X,A)\to M(\OO_X\otimes C)
\]
is Cuntz-Pimsner covariant.
\end{lem}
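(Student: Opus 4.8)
The plan is to verify the four conditions in the definition of Cuntz-Pimsner covariance for the composition
\[
\Bigl(\bar{k_X\otimes\id}\circ\psi,\ \bar{k_A\otimes\id}\circ\pi\Bigr):
(X,A)\to M(\OO_X\otimes C),
\]
using $B=\OO_X\otimes C$ as the codomain coefficient algebra and recalling that $(k_X,k_A):(X,A)\to (M(\OO_X),\OO_X)$ is itself a Cuntz-Pimsner covariant correspondence homomorphism with $\cpct{k_X}:\KK(X)\to\OO_X$ the induced map.

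First I would check conditions (i) and (ii): the map $\bar{k_A\otimes\id}\circ\pi$ is nondegenerate since it is a composition of nondegenerate homomorphisms (here one uses that $\pi$ is nondegenerate by hypothesis, that $k_A\otimes\id$ is nondegenerate, and that the extension $\bar{k_A\otimes\id}$ of a nondegenerate homomorphism is again nondegenerate); and one must show $\bar{k_X\otimes\id}\circ\psi(X)\subset M_{\OO_X\otimes C}(\OO_X\otimes C)$, i.e.\ that the image lands in the relative multipliers. This follows because $\psi(X)\subset M_C(X\otimes C)$ and $k_X\otimes\id$ maps $X\otimes C$ into $M_C(\OO_X\otimes C)$ appropriately, so the extension $\bar{k_X\otimes\id}$ carries $M_C(X\otimes C)$ into $M_{\OO_X\otimes C}(\OO_X\otimes C)$; this is exactly the kind of tracking of relative multipliers set up in \cite[Section~2]{KQRCorrespondenceFunctor}. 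The bookkeeping here, while routine, is the most delicate part, because one must be careful that the relative-multiplier subscripts compose correctly under the homomorphism $k_A\otimes\id:A\otimes C\to M(\OO_X)\otimes C\subset M(\OO_X\otimes C)$, and apply the extension result (\cite[Proposition~A.11]{dkq}) recorded in the Preliminaries with the right choice of the intermediate homomorphism $\lambda$.

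Next I would address condition (iii), namely $\bar{k_A\otimes\id}\circ\pi(J_X)\subset M\bigl(\OO_X\otimes C;\,J_{\OO_X\otimes C}\bigr)$. Since $J_{\OO_X}=0$ in the sense that $k_A(J_X)$ is absorbed into the image of $\cpct{k_X}$ — more precisely, because $k_X$ is Cuntz-Pimsner covariant one has $\bar{\varphi}\,(k_A(a))=\cpct{k_X}(\varphi_A(a))$ for $a\in J_X$ — the relevant ideal $J_{\OO_X\otimes C}$ of $\OO_X\otimes C$ contains $\cpct{k_X}(\KK(X))\otimes C$ on the relevant pieces, and the hypothesis $\pi(J_X)\subset M(A\otimes C;J_X\otimes C)$ gets pushed forward: applying $\bar{k_A\otimes\id}$ and using $(k_A\otimes\id)(J_X\otimes C)=k_A(J_X)\otimes C$ together with $\cpct{k_X}\circ\varphi_A = \bar{\varphi_{\OO_X}}\circ k_A$ on $J_X$ yields the required containment. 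This is where the hypothesis on $\pi$ is used in an essential way.

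Finally, for condition (iv) I would verify commutativity of the pentagon \eqref{CP diagram} with $J_Y$ replaced appropriately: on $J_X$, chasing $a\mapsto\varphi_A(a)\mapsto\cpct{(\bar{k_X\otimes\id}\circ\psi)}(\varphi_A(a))$ versus $a\mapsto\bar{k_A\otimes\id}\circ\pi(a)\mapsto$ its image under $\bar{\varphi}$, one reduces to the identity $\cpct{(\bar{k_X\otimes\id}\circ\psi)} = \bar{\cpct{k_X}\otimes\id}\circ\cpct\psi$ on $\KK(X)$, which holds because $\cpct{\,\cdot\,}$ is functorial on compacts and $\cpct{k_X}=\OO_X$'s structure map, combined with the fact that $(\psi,\pi)$ itself satisfies the analogue of \eqref{nondegenerate commute}. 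The main obstacle, as flagged above, is condition (i)/(ii) — keeping the relative-multiplier algebras straight through the tensor and the two extensions — rather than any genuinely hard analysis; the appeal to \lemref{coaction CP}'s underlying engine \cite[Lemma~3.2]{KQRCorrespondenceFunctor} and \cite[Corollary~3.6]{KQRCorrespondenceFunctor} does most of the conceptual work once the setup is in place.
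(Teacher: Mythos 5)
There is a genuine gap at the crux of the argument. The only non-routine content of this lemma is the covariance identity
\[
\Bigl(\bar{k_X\otimes\id}\circ\psi\Bigr)^{(1)}\circ\varphi_A
=\bar{k_A\otimes\id}\circ\pi
\quad\text{on }J_X,
\]
and your reduction of it to ``functoriality of $\cpct{(\,\cdot\,)}$ plus the analogue of \eqref{nondegenerate commute} for $(\psi,\pi)$'' stops one step short. Those two facts give
$\bigl(\bar{k_X\otimes\id}\circ\psi\bigr)^{(1)}\circ\varphi_A
=\bar{(k_X\otimes\id)^{(1)}}\circ\bar{\varphi_{A\otimes C}}\circ\pi$ on $J_X$,
but to finish you must know that
$\bar{(k_X\otimes\id)^{(1)}}\circ\bar{\varphi_{A\otimes C}}=\bar{k_A\otimes\id}$
\emph{on the image of $\pi(J_X)$}. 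That identity does not hold on all of $M(A\otimes C)$ (it already fails on $A\otimes C$ off the ideal $J_X\otimes C$ --- that is the whole point of $J_X$); it holds on $J_X\otimes C$ by an elementary-tensor computation using Cuntz--Pimsner covariance of $(k_X,k_A)$, and hence, by strict continuity, on the relative multiplier algebra $\ideal{A\otimes C}{J_X\otimes C}$. The hypothesis $\pi(J_X)\subset \ideal{A\otimes C}{J_X\otimes C}$ is exactly what puts $\pi(a)$ into the domain where this extended identity applies; that is the one place the hypothesis does real work, and it is absent from your verification of the covariance diagram. This chain --- elementary tensors, strict extension, then the hypothesis --- is precisely the paper's proof.

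A secondary issue: your conditions (i)--(iii) are framed as if the target were a correspondence $(M(Y),M(B))$ with $Y\ne B$, so you invoke an ideal $J_{\OO_X\otimes C}$ and assert things like ``$J_{\OO_X}=0$'' and that $J_{\OO_X\otimes C}$ contains $\cpct{k_X}(\KK(X))\otimes C$ ``on the relevant pieces''. Here the composition lands in the multiplier algebra of the single $C^*$-algebra $\OO_X\otimes C$, i.e.\ it is a Toeplitz representation, and Cuntz--Pimsner covariance means exactly the displayed identity above; there is no separate ideal-containment condition (iii) to check, and the relative-multiplier bookkeeping you flag as ``the most delicate part'' is in fact the routine part. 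The hypothesis on $\pi(J_X)$ is needed in the covariance identity itself, not in a containment among ideals of $\OO_X\otimes C$.
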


\begin{proof}
By checking on elementary tensors one verifies that, on the ideal $J_X\otimes C$ of $A\otimes C$, we have
\begin{align*}
(k_X\otimes\id)^{(1)}\circ \varphi_{A\otimes C}
&=(k_X^{(1)}\otimes\id)\circ (\varphi_A\otimes\id)
\\&=k_X^{(1)}\circ \varphi_A\otimes \id
\\&=k_A\otimes\id,
\end{align*}
and so, by strict continuity, on $M(A\otimes C;J_X\otimes C)$ we have
\[
\bar{(k_X\otimes\id)^{(1)}}\circ \bar{\varphi_{A\otimes C}}=\bar{k_A\otimes\id}.
\]
Thus, on $J_X$ we have
\begin{align*}
\Bigl(\bar{k_X\otimes\id}\circ\psi\Bigr)^{(1)}\circ\varphi_A
&=\bar{(k_X\otimes\id)^{(1)}}\circ\psi^{(1)}\circ\varphi_A
\\&=\bar{(k_X\otimes\id)^{(1)}}\circ\bar{\varphi_{A\otimes C}}\circ\pi,
\\\intertext{by \cite[Lemma~3.3]{KQRCorrespondenceFunctor}, since $(\psi,\pi)$ is nondegenerate,}
\\&=\bar{k_A\otimes\id}\circ\pi,
\end{align*}
which is Cuntz-Pimsner covariance.
\end{proof}

Here is the connection between Lemmas~\ref{coaction CP} and \ref{tensor}:

\begin{lem}\label{contained}
Let $(X,A)$ be a correspondence, let $C$ be a $C^*$-algebra, and let $(X\otimes C,A\otimes C)$ be the external-tensor-product correspondence. Then
\[
J_X\otimes C\subset J_{X\otimes C},
\]
with equality if $C$ is exact.
\end{lem}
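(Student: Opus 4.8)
Recall the relevant definitions. For a correspondence $(X,A)$, the ideal $J_X$ is $\varphi_A^{-1}(\KK(X))\cap (\ker\varphi_A)^\perp$, i.e. the largest ideal on which $\varphi_A$ is injective and takes values in the compacts. So to prove $J_X\otimes C\subset J_{X\otimes C}$ I must show two things: first, that $\varphi_{A\otimes C}$ maps $J_X\otimes C$ into $\KK(X\otimes C)$; second, that $\varphi_{A\otimes C}$ is injective on $J_X\otimes C$. The first is essentially formal from the identification $\varphi_{A\otimes C}=\varphi_A\otimes\id_C$ together with $\KK(X)\otimes C\cong\KK(X\otimes C)$ (for the external tensor product), so that $\varphi_A(J_X)\otimes C\subset\KK(X)\otimes C=\KK(X\otimes C)$. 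For injectivity on $J_X\otimes C$, note $\ker(\varphi_A\otimes\id_C)=\ker\varphi_A\otimes C$ (tensoring by $C$ is exact on the left, or one uses that $C^*$-tensor products preserve inclusions of ideals in a way that detects the kernel); since $J_X\perp\ker\varphi_A$ inside $A$, we get $J_X\otimes C$ has zero intersection with $\ker\varphi_A\otimes C$, giving injectivity. Hence $J_X\otimes C\subset J_{X\otimes C}$.

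**The reverse inclusion under exactness.** Now suppose $C$ is exact. I want $J_{X\otimes C}\subset J_X\otimes C$. The subtlety is that $J_{X\otimes C}$ is defined via the two conditions relative to $A\otimes C$, and I must pull these back. Consider the short exact sequence $0\to J_X\to A\to A/J_X\to 0$. Since $C$ is exact, tensoring gives a short exact sequence $0\to J_X\otimes C\to A\otimes C\to (A/J_X)\otimes C\to 0$. Now I claim $J_{X\otimes C}$ is contained in the kernel of the quotient map $A\otimes C\to (A/J_X)\otimes C$, which is exactly $J_X\otimes C$. To see this, it suffices to show that the image of $J_{X\otimes C}$ in $(A/J_X)\otimes C$ is zero. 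The point is that $(A/J_X)\otimes C$ carries a natural $(X/XJ_X)\otimes C$-type structure, or more directly: the quotient map induces a correspondence whose left-action kernel-and-compactness data shows that $J_{(X/\cdot)\otimes C}=0$; since exactness makes the formation of $J$ behave well with respect to this quotient, any element of $J_{X\otimes C}$ maps into $J_{(X\otimes C)/(X\otimes C)(J_X\otimes C)}$, which is $0$. Therefore $J_{X\otimes C}\subset J_X\otimes C$, and combined with the first part we get equality.

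**Alternative cleaner route for the reverse inclusion.** Rather than chasing quotients, I would instead argue directly. An element $t\in A\otimes C$ lies in $J_{X\otimes C}$ iff $\varphi_{A\otimes C}(t)\in\KK(X\otimes C)$ and $ta=0$ for all $a\in\ker\varphi_{A\otimes C}=\ker\varphi_A\otimes C$. The condition $ta=0$ for all such $a$ says precisely $t\in(\ker\varphi_A\otimes C)^\perp$. When $C$ is exact I expect $(\ker\varphi_A\otimes C)^\perp = (\ker\varphi_A)^\perp\otimes C = J_X'\otimes C$ where $J_X'$ is the annihilator of $\ker\varphi_A$ in $A$ — this is the crucial exactness input, since orthogonal complements of ideals in $C^*$-tensor products are only well-behaved when one factor is exact. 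Then $t\in J_X'\otimes C$, and on $J_X'\otimes C$ the map $\varphi_{A\otimes C}=\varphi_A\otimes\id$ restricts to $\varphi_A|_{J_X'}\otimes\id$; since $\varphi_A|_{J_X'}$ need not land in $\KK(X)$, the extra condition $\varphi_{A\otimes C}(t)\in\KK(X\otimes C)=\KK(X)\otimes C$ forces (again by exactness, detecting membership in the ideal $\KK(X)\otimes C$ after applying $\varphi_A\otimes\id$) that $t\in(\varphi_A^{-1}\KK(X))\otimes C$. Intersecting the two, $t\in J_X\otimes C$.

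**Main obstacle.** The genuinely nontrivial point — and the only place exactness is used — is the behaviour of orthogonal complements (equivalently, quotients/kernels) of ideals under minimal tensor products: that $(\ker\varphi_A)^\perp\otimes C$ is the annihilator of $\ker\varphi_A\otimes C$ inside $A\otimes C$, and that $\ker(\varphi_A\otimes\id_C)=\ker\varphi_A\otimes C$ with $\varphi_A^{-1}(\KK X)\otimes C=(\varphi_A\otimes\id)^{-1}(\KK X\otimes C)$. Without exactness, $\ker\varphi_A\otimes C$ can be a proper ideal of $\ker(\varphi_A\otimes\id_C)$, which is exactly why only the inclusion $J_X\otimes C\subset J_{X\otimes C}$ survives in general. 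I would phrase the exact-case argument so that these tensor-exactness facts are invoked as a single clean lemma (or citation to Katsura / Kirchberg), and the rest is routine identification of $\varphi_{A\otimes C}$ with $\varphi_A\otimes\id_C$ and of $\KK(X\otimes C)$ with $\KK(X)\otimes C$.
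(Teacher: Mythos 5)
Your argument for the general inclusion $J_X\otimes C\subset J_{X\otimes C}$ has a genuine gap: it rests on the identity $\ker(\varphi_A\otimes\id_C)=\ker\varphi_A\otimes C$, which you yourself correctly point out in your final paragraph is \emph{false} for non-exact $C$ --- and this inclusion is precisely the part of the lemma that must hold for arbitrary $C$. Concretely, membership in $J_{X\otimes C}=\varphi_{A\otimes C}^{-1}(\KK(X\otimes C))\cap(\ker\varphi_{A\otimes C})^{\perp}$ requires orthogonality to all of $\ker\varphi_{A\otimes C}$, and showing that $J_X\otimes C$ meets the possibly strictly smaller ideal $\ker\varphi_A\otimes C$ trivially does not give this. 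The repair is cheap but different from what you wrote: the minimal tensor product of injective $*$-homomorphisms is injective, so $(\varphi_A|_{J_X})\otimes\id_C$ maps $J_X\otimes C$ injectively into $\KK(X)\otimes C=\KK(X\otimes C)$ with no exactness needed; then either invoke Katsura's characterization of $J_{X\otimes C}$ as the \emph{largest} ideal mapped injectively into the compacts (this is exactly what the paper does), or note that for ideals $I\cap K=IK$, so injectivity of $\varphi_{A\otimes C}$ on $J_X\otimes C$ forces $(J_X\otimes C)\cdot\ker\varphi_{A\otimes C}=0$ directly.

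For the reverse inclusion under exactness, your first (quotient) sketch is not a proof: the assertions that the induced quotient correspondence has vanishing Katsura ideal and that ``exactness makes the formation of $J$ behave well with respect to this quotient'' are exactly the content to be established, and neither is justified. Your ``alternative cleaner route'' is essentially the paper's argument --- exactness gives the slice map property for ideals, one checks that every slice $(\id\otimes\omega)(x)$ of $x\in J_{X\otimes C}$ satisfies the two defining conditions of $J_X$ (for the annihilator condition, factoring $\omega=c\cdot\omega'$ as you and the paper both do, and using only the trivial inclusion $\ker\varphi_A\otimes C\subset\ker\varphi_{A\otimes C}$) --- so that part is sound once stated at the level of slices rather than as unproved identities like $(\ker\varphi_A\otimes C)^{\perp}=(\ker\varphi_A)^{\perp}\otimes C$, each of which is itself an instance of the slice-map argument.
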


\begin{proof}
We use the characterization \cite[Paragraph following Definition~2.3]{KatsuraCorrespondence} of 
$J_X$ as the largest ideal of $A$ that $\varphi_A$ maps injectively into $\KK(X)$, and similarly for $J_{X\otimes C}$.
By \cite[Corollary~3.38]{tfb}, for example, we have
\[
\KK(X\otimes C)=\KK(X)\otimes C,
\]
so
\[
\varphi_{A\otimes C}=\varphi_A\otimes \id_C.
\]
Since $\varphi_A$ maps $J_X$ injectively into $\KK(X)$,
$\varphi_A\otimes\id$ maps $J_X\otimes C$ injectively into $\KK(X)\otimes C$.
Therefore $\varphi_{A\otimes C}$ maps $J_X\otimes C$ injectively into $\KK(X\otimes C)$,
so $J_X\otimes C\subset J_{X\otimes C}$.

Now assume that $C$ is exact,
and let $x\in J_{X\otimes C}$.
Since $C$ is exact, it has the slice map property, so to show that $x\in J_X\otimes C$ it suffices to show that $(\id\otimes\omega)(x)\in J_X$ for all $\omega\in C^*$.
To verify the first property of $J_X$, we have
\begin{align*}
\varphi_A\bigl((\id\otimes\omega)(x)\bigr)
&=(\id\otimes\omega)\circ (\varphi_A\otimes\id)(x),
\end{align*}
which is in $\KK(X)$ because
\[
(\varphi_A\otimes\id)(x)
=\varphi_{A\otimes C}(x)
\in \KK(X\otimes C)
=\KK(X)\otimes C.
\]
For the other property of $J_X$, let $a\in \ker\phi_A$.
Factor $\omega=c\cdot \omega'$ with $c\in C$ and $\omega'\in C^*$. 
Then
\begin{align*}
\bigl((\id\otimes\omega)(x)\bigr)a
&=(\id\otimes c\cdot \omega')\bigl(x(a\otimes 1)\bigr)
\\&=(\id\otimes\omega')\bigl(x(a\otimes c)\bigr),
\end{align*}
which is $0$ because
\[
a\otimes c\in \ker\varphi_A\otimes C=\ker\varphi_{A\otimes C}.
\qedhere
\]
\end{proof}

Recall from \cite[Proposition~3.9]{enchilada} that if $(\delta,\sigma,\epsilon)$ is a coaction of $G$ on a correspondence $(A,X,B)$, then the \emph{crossed product} correspondence 
$(A\rtimes_\delta G,X\rtimes_\sigma G,B\rtimes_\epsilon G)$ is defined by
\[
X\rtimes_\sigma G=\clspn\{j_X(X)\cdot j_G^B(C_0(G))\}
\subset M(X\otimes \KK(L^2(G))),
\]
where
\begin{align*}
j_X&=(\id\otimes\lambda)\circ\sigma\\
j_G^B&=1_{M(B)}\otimes M.
\end{align*}
$X\rtimes_\sigma G$ is an $A\rtimes_\delta G-B\rtimes_\epsilon G$ correspondence in a natural way when we use the regular representations
\begin{align*}
(j_A,j^A_G):(A,C_0(G))&\to M(A\otimes \KK(L^2(G)))\\
(j_B,j^B_G):(B,C_0(G))&\to M(B\otimes \KK(L^2(G))).
\end{align*}

\cite[Lemma~3.10]{enchilada} proves that there is a coaction $\mu$ of $G$ on $\KK(X)$ such that
\begin{itemize}
\item $\varphi_A:A\to M(\KK(X))$ is $\delta-\mu$ equivariant;

\item there is an isomorphism $\KK(X\rtimes_\sigma G)\cong \KK(X)\rtimes_\mu G$
that carries $\varphi_{A\rtimes_\delta G}$ to $\varphi_A\rtimes G$.
\end{itemize}
In fact, the an examination of the construction used in \cite{enchilada} reveals that the coaction on $\KK(X)$ is none other than
\[
\sigma^{(1)}:\KK(X)\to M_{C^*(G)}(\KK(X)\otimes C^*(G))
=M_{C^*(G)}\bigl(\KK(X\otimes C^*(G)\bigr),
\]
so that the left-module action of $A\rtimes_\delta G$ on $X\rtimes_\sigma G$ can be regarded as
\[
\varphi_A\rtimes G:A\rtimes_\delta G\to M\bigl(\KK(X)\rtimes_{\sigma^{(1)}} G\bigr).
\]

\begin{rem}\label{j nondegenerate}
Note that
\[
(j_A,j_X,j_B):(A,X,B)\to \bigl(M(A\rtimes_\delta G),M(X\rtimes_\sigma G),M(B\rtimes_\epsilon G)\bigr)
\]
is a correspondence homomorphism. In fact, it is a bit more: since $j_A$ and $j_B$ are nondegenerate by the standard theory of $C^*$-coactions, it follows from \cite[Lemma~3.10]{enchilada} that he correspondence homomorphism $(j_A,j_X,j_B)$ is nondegenerate.
\end{rem}

\begin{lem}\label{j CP}
Let $(\sigma,\delta)$ be a coaction of $G$ on a correspondence $(X,A)$. Then the canonical correspondence homomorphism $(j_X,j_A):(X,A)\to (M(X\rtimes_\sigma G),M(A\rtimes_\delta G))$ is Cuntz-Pimsner covariant if and only if
\[
j_A(J_X)\subset M(A\rtimes_\delta G;J_{X\rtimes_\sigma G}).
\]
\end{lem}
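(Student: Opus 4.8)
The plan is to treat the two implications separately, the forward one being a tautology. If $(j_X,j_A)$ is Cuntz-Pimsner covariant, then, reading off the definition of Cuntz-Pimsner covariance with coefficient algebra $A\rtimes_\delta G$ and target correspondence $X\rtimes_\sigma G$, its third clause says exactly that $j_A(J_X)\subset M(A\rtimes_\delta G;J_{X\rtimes_\sigma G})$, so the ``only if'' direction requires nothing further.

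For the converse I would follow the proof of \lemref{coaction CP}. By \cite[Lemma~3.2]{KQRCorrespondenceFunctor}, once we know that $(j_X,j_A)$ is a nondegenerate correspondence homomorphism with $j_X(X)\subset M_{A\rtimes_\delta G}(X\rtimes_\sigma G)$, Cuntz-Pimsner covariance reduces to the single remaining condition $j_A(J_X)\subset M(A\rtimes_\delta G;J_{X\rtimes_\sigma G})$, which is now the hypothesis. Nondegeneracy of $(j_X,j_A)$ is \remref{j nondegenerate}. For the relative-multiplier condition I would use that $X\rtimes_\sigma G=\clspn\{j_X(X)\cdot j_G^A(C_0(G))\}$, that $A\rtimes_\delta G=\clspn\{j_A(A)j_G^A(C_0(G))\}=\clspn\{j_G^A(C_0(G))j_A(A)\}$ (the second equality by taking adjoints), and that $(j_X,j_A)$, being a correspondence homomorphism, intertwines the module operations: $j_X(x)\cdot j_A(a)=j_X(x\cdot a)$ and $\bar{\varphi_{A\rtimes_\delta G}}(j_A(a))\cdot j_X(x)=j_X(\varphi_A(a)\cdot x)$. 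The right-module half $j_X(X)\cdot(A\rtimes_\delta G)\subset X\rtimes_\sigma G$ then drops out at once, and the left-module half, after absorbing the $j_A(A)$ factors, reduces to the claim that $\bar{\varphi_{A\rtimes_\delta G}}(j_G^A(C_0(G)))\cdot j_X(X)\subset X\rtimes_\sigma G$.

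That last claim is the step I expect to be the main obstacle, because it is the one point where one must descend to the concrete realization of $X\rtimes_\sigma G$ instead of arguing formally. Using the isomorphism $\KK(X\rtimes_\sigma G)\cong\KK(X)\rtimes_{\sigma^{(1)}}G$ that carries $\varphi_{A\rtimes_\delta G}$ to $\varphi_A\rtimes G$, the operator $\bar{\varphi_{A\rtimes_\delta G}}(j_G^A(f))$ is identified with left multiplication by $1\otimes M(f)$ on $X\rtimes_\sigma G\subset M(X\otimes\KK(L^2(G)))$, while $j_X(x)=(\id\otimes\lambda)(\sigma(x))$; so the claim becomes that $(1\otimes M(f))\cdot(\id\otimes\lambda)(\sigma(x))$ lies in $\clspn\{(\id\otimes\lambda)(\sigma(X))\cdot(1\otimes M(C_0(G)))\}=X\rtimes_\sigma G$. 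I would verify this by the same computation that proves $A\rtimes_\delta G=\clspn\{j_G^A(C_0(G))j_A(A)\}$ in the purely $C^*$-algebraic setting: commute $M(f)$ past $\lambda$ using the standard relation between the multiplication and regular representations, and absorb the remaining factor on the right, invoking the coaction-nondegeneracy of $\sigma$. With this in hand, \cite[Lemma~3.2]{KQRCorrespondenceFunctor} applies and yields Cuntz-Pimsner covariance of $(j_X,j_A)$.
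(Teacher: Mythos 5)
Your proposal is correct and follows the same route as the paper: the paper's proof is exactly the one\-/line reduction via \remref{j nondegenerate} and \cite[Lemma~3.2]{KQRCorrespondenceFunctor} to the observation that $j_X(X)\subset M_{A\rtimes_\delta G}(X\rtimes_\sigma G)$, which the paper leaves as an observation and you verify in detail. Your extra work on the left-module half (commuting $M_f$ past $\lambda$ and invoking coaction-nondegeneracy of $\sigma$) is a legitimate filling-in of that observation, not a different argument.
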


\begin{proof}
By \remref{j nondegenerate} and \cite[Lemma 3.2]{KQRCorrespondenceFunctor}, it suffices to observe that
\[
j_X(X)\subset M_{A\rtimes_\delta G}(X\rtimes_\sigma G).
\qedhere
\]
\end{proof}

Although the following concept does not appear in \cite{enchilada}, we will find it useful:

\begin{defn}
Let $(\delta,\sigma,\epsilon)$ be a coaction of $G$ on a correspondence $(A,X,B)$,
let $(\pi,\psi,\rho):(A,X,B)\to (M(D),M(Y),M(E))$ be a correspondence homomorphism,
and let $\mu:C_0(G)\to M(D)$ and $\nu:C_0(G)\to M(E)$ be homomorphisms.
Then $(\pi,\psi,\rho,\mu,\nu)$ is \emph{covariant for $(\delta,\sigma,\epsilon)$} if
\begin{enumerate}
\item
$(\pi,\mu)$ and $(\rho,\nu)$ are covariant for $(A,\delta)$ and $(B,\epsilon)$, respectively;

\item
for all $\xi\in X$ we have
\[
\bar{\psi\otimes\id}\circ\sigma(\xi)=
\bar{\mu\otimes\id}(w_G)\cdot \bigl(\psi(\xi)\otimes 1\bigr)\cdot \bar{\nu\otimes\id}(w_G)^*.
\]
\end{enumerate}
\end{defn}

\begin{rem}
Note that covariance of $(\pi,\mu)$ and $(\rho,\nu)$ entails that $\pi,\mu,\rho,\nu$ are all nondegenerate.
\end{rem}

If $A=B$, $\delta=\epsilon$, $\pi=\rho$, and $\mu=\nu$, we say $(\psi,\pi,\mu)$ is covariant for $(\sigma,\delta)$.

\section{Coactions on Cuntz-Pimsner algebras}\label{coactions}

\begin{prop}\label{coaction}
Let $(\sigma,\delta)$ be a coaction of $G$ on a correspondence $(X,A)$.
If
\[
\delta(J_X)\subset M(A\otimes C^*(G);J_X\otimes C^*(G)),
\]
then there is a unique coaction $\zeta$ of $G$ on $\OO_X$ 
making the diagram
\[
\xymatrix@C+30pt{
(X,A) \ar[r]^-{(\sigma,\delta)} \ar[d]_{(k_X,k_A)}
&(M_{\cg}(X\otimes \cg),M_{\cg}(A\otimes \cg)) \ar[d]^{(\bar{k_X\otimes\id},\bar{k_A\otimes\id})}
\\
\OO_X \ar@{-->}[r]_-\zeta^-{!}
&M_{\cg}(\OO_X\otimes \cg)
}
\]
commute.
\end{prop}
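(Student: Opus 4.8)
The plan is to construct $\zeta$ by applying the Cuntz--Pimsner functoriality of \cite{KQRCorrespondenceFunctor}, in the packaged form of \lemref{tensor}, to the correspondence homomorphism $(\sigma,\delta)$ with $C=\cg$, and then to verify the three coaction axioms for $\zeta$ by evaluating on the generators $k_A(A)\cup k_X(X)$ of $\OO_X$. For the construction: by coaction-nondegeneracy (the first remark following the definition of a correspondence coaction) we have $\sigma(X)\subset M_{\cg}(X\otimes\cg)$ and $\delta(A)\subset M_{\cg}(A\otimes\cg)$, and $(\sigma,\delta):(X,A)\to(M_{\cg}(X\otimes\cg),M_{\cg}(A\otimes\cg))$ is a nondegenerate correspondence homomorphism; the standing hypothesis $\delta(J_X)\subset M(A\otimes\cg;J_X\otimes\cg)$ is precisely the hypothesis of \lemref{tensor}. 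So \lemref{tensor} makes $(\Psi,\Pi):=(\bar{k_X\otimes\id}\circ\sigma,\bar{k_A\otimes\id}\circ\delta)$ a Cuntz--Pimsner covariant homomorphism of $(X,A)$ into $M(\OO_X\otimes\cg)$, and \cite[Corollary~3.6]{KQRCorrespondenceFunctor} then produces a unique homomorphism $\zeta:=\OO_{\Psi,\Pi}:\OO_X\to M(\OO_X\otimes\cg)$ intertwining $(k_X,k_A)$ with $(\Psi,\Pi)$; this already yields the uniqueness assertion.

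Next I would upgrade the codomain to match the statement. The relative-multiplier extension theorem recalled in \secref{prelim} (applied with both of its homomorphisms the canonical map $c\mapsto 1\otimes c$) shows that $\bar{k_X\otimes\id}$ and $\bar{k_A\otimes\id}$ carry $M_{\cg}(X\otimes\cg)$ and $M_{\cg}(A\otimes\cg)$ into $M_{\cg}(\OO_X\otimes\cg)$; since $M_{\cg}(\OO_X\otimes\cg)$ is a $C^*$-subalgebra of $M(\OO_X\otimes\cg)$ containing $\zeta(k_X(X))=\Psi(X)$ and $\zeta(k_A(A))=\Pi(A)$, and these generate $\OO_X$, it follows that $\zeta(\OO_X)\subset M_{\cg}(\OO_X\otimes\cg)$, so the square in the statement makes sense and commutes. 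It then remains to see that $\zeta$ is a coaction. For injectivity I would compose with the slice map $\id_{\OO_X}\otimes 1$ by the trivial character $1\in B(G)$: using the counit identities $(\id\otimes 1)\circ\delta=\id_A$ and $(\id\otimes 1)\circ\sigma=\id_X$ (the first is standard for $C^*$-coactions, the second follows the same way from the coaction identity for $\sigma$ and coaction-nondegeneracy) together with the fact that $\id_{\OO_X}\otimes 1$ commutes past the extensions $\bar{k_A\otimes\id}$ and $\bar{k_X\otimes\id}$, one gets $(\id_{\OO_X}\otimes 1)\circ\zeta=\id_{\OO_X}$ on the generators, hence everywhere, so $\zeta$ is injective. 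For the coaction identity, both $\bar{\zeta\otimes\id}\circ\zeta$ and $\bar{\id_{\OO_X}\otimes\delta_G}\circ\zeta$ are homomorphisms $\OO_X\to M(\OO_X\otimes\cg\otimes\cg)$; evaluating on $k_A(a)$ and on $k_X(\xi)$ and unwinding the nested extensions (a composite of strictly continuous extensions is the extension of the composite), they reduce respectively to the coaction identity \eqref{coaction diagram} for $\delta$ and to the coaction identity for $\sigma$, and hence coincide.

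For coaction-nondegeneracy, Katayama's criterion \eqref{katayama} reduces the claim to showing that $\zeta(\OO_X)(1\otimes\cg)$ generates $\OO_X\otimes\cg$ as a $C^*$-algebra. Since $\delta(a)(1\otimes c)\in A\otimes\cg$ and $\sigma(\xi)(1\otimes c)\in X\otimes\cg$, we have $\zeta(k_A(a))(1\otimes c)=(k_A\otimes\id)\bigl(\delta(a)(1\otimes c)\bigr)$ and $\zeta(k_X(\xi))(1\otimes c)=(k_X\otimes\id)\bigl(\sigma(\xi)(1\otimes c)\bigr)$, so coaction-nondegeneracy of $\delta$ and of $\sigma$ give $\clspn\{\zeta(k_A(A))(1\otimes\cg)\}=k_A(A)\otimes\cg$ and $\clspn\{\zeta(k_X(X))(1\otimes\cg)\}=k_X(X)\otimes\cg$; the $C^*$-subalgebra of $\OO_X\otimes\cg$ generated by these two contains every elementary tensor $w\otimes c$ with $w$ a product of elements of $k_A(A)\cup k_X(X)\cup k_X(X)^*$ and $c\in\cg$ (factoring $c$ through an approximate identity of $\cg$), hence is all of $\OO_X\otimes\cg$. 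The one substantive input here is \lemref{tensor}: it replaces Cuntz--Pimsner covariance of $(\sigma,\delta)$ as a homomorphism into $M(X\otimes\cg)$ --- which by Lemmas~\ref{coaction CP} and \ref{contained} would require $\cg$ to be exact and is unavailable in general --- by the stated hypothesis on $\delta(J_X)$. Everything else is relative-multiplier bookkeeping, and the part I expect to demand the most care is the codomain refinement, i.e.\ checking that $\zeta$ genuinely lands in $M_{\cg}(\OO_X\otimes\cg)$ and that the nested extensions $\bar{k_X\otimes\id}$, $\bar{k_A\otimes\id}$, $\bar{\zeta\otimes\id}$, $\bar{\id_{\OO_X}\otimes\delta_G}$ compose as expected on the relevant relative-multiplier algebras.
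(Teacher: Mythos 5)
Your proposal is correct and follows essentially the same route as the paper: \lemref{tensor} plus \cite[Corollary~3.6]{KQRCorrespondenceFunctor} to construct $\zeta$ and get uniqueness, verification of the coaction identity on the generators $k_A(A)\cup k_X(X)$, and Katayama's criterion \eqref{katayama} for coaction-nondegeneracy. The only divergence is your injectivity argument via slicing by the trivial character of $\cg$ (which is a legitimate homomorphism argument), where the paper simply deduces injectivity of $\zeta$ from injectivity of $\delta$; both are valid.
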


\begin{proof}
By definition of correspondence coaction, the correspondence homomorphism $(\sigma,\delta)$ is nondegenerate,
and so, by \lemref{tensor}, our hypothesis guarantees that the composition
\[
\Bigl(\bar{k_X\otimes\id}\circ\sigma,\bar{k_A\otimes\id}\circ\delta\Bigr)
\]
is Cuntz-Pimsner covariant.
Thus
there is a unique homomorphism $\zeta$ making the diagram commute, and moreover $\zeta$ is injective because $\delta$ is.

For the coaction identity, we have
\begin{align*}
\bar{\zeta\otimes\id}\circ\zeta\circ k_X
&=\bar{\zeta\otimes\id}\circ\zeta_X
\\&=\bar{\zeta\otimes\id}\circ\bar{k_X\otimes\id}\circ\sigma
\\&=\bar{\zeta\circ k_X\otimes\id}\circ\sigma
\\&=\bar{\bar{k_X\otimes\id}\circ\sigma\otimes\id}\circ\sigma
\\&=\bar{k_X\otimes\id\otimes\id}\circ\bar{\sigma\otimes\id}\circ\sigma
\\&=\bar{k_X\otimes\id\otimes\id}\circ\bar{\id\otimes\zeta_G}\circ\sigma
\\&=\bar{\id\otimes\zeta_G}\circ\bar{k_X\otimes\id}\circ\sigma
\\&=\bar{\id\otimes\zeta_G}\circ\zeta\circ k_X,
\end{align*}
and similarly
\[
\bar{\zeta\otimes\id}\circ\zeta\circ k_A=\bar{\id\otimes\zeta_G}\circ\zeta\circ k_A,
\]
and it follows that
\[
\bar{\zeta\otimes\id}\circ\zeta=\bar{\id\otimes\zeta_G}\circ\zeta.
\]

For the coaction-nondegeneracy, routine computations show that
\[
\clspn\bigl\{\zeta_X(X)(1\otimes \cg)\bigr\}
=k_X(X)\otimes \cg,
\]
and of course
\[
\clspn\bigl\{\zeta_A(A)(1\otimes \cg)\bigr\}
=k_A(A)\otimes \cg,
\]
and hence the property \eqref{katayama} holds.
\end{proof}

We now develop a 
few tools involving inner coactions on correspondences, for use elsewhere.

\begin{prop}\label{inner}
Let $X$ be an $A-B$ correspondence,
and let $\mu:C_0(G)\to M(A)$ and $\nu:C_0(G)\to M(B)$ be nondegenerate homomorphisms,
and let $\delta^\mu$ and $\delta^\nu$ be the associated inner coactions on $A$ and $B$.
Then there is a $\delta^\mu-\delta^\nu$ compatible coaction $\sigma$ on $X$ given by
\[
\sigma(\xi)=\bar{\mu\otimes\id}(w_G)\cdot (\xi\otimes 1)\cdot \bar{\nu\otimes\id}(w_G)^*.
\]
\end{prop}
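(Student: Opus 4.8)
The plan is to verify the three conditions in the definition of a correspondence coaction for the triple $(\delta^\mu,\sigma,\delta^\nu)$. Write $u=\bar{\mu\otimes\id}(w_G)\in M(A\otimes\cg)$ and $v=\bar{\nu\otimes\id}(w_G)\in M(B\otimes\cg)$; since $w_G$ is unitary and $\mu\otimes\id$, $\nu\otimes\id$ are nondegenerate, $u$ and $v$ are unitary multipliers, $\delta^\mu(a)=u(a\otimes 1)u^*$ and $\delta^\nu(b)=v(b\otimes 1)v^*$, and it is standard that these inner coactions really are coactions on $A$ and $B$ (in particular nondegenerate and satisfying the coaction identity). The formula $\sigma(\xi)=u(\xi\otimes 1)v^*$ makes sense in $M(X\otimes\cg)$ --- with $\xi\otimes 1$ the image of $\xi$ under the canonical trivial-coaction embedding $X\hookrightarrow M(X\otimes\cg)$, acted on the left by $u$ and on the right by $v^*$ --- and is plainly linear. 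That $(\delta^\mu,\sigma,\delta^\nu)$ is a correspondence homomorphism is then a direct check using $u^*u=1$ and $vv^*=1$: from $(a\xi)\otimes 1=(a\otimes 1)(\xi\otimes 1)$ one gets $\sigma(a\cdot\xi)=u(a\otimes 1)u^*\cdot u(\xi\otimes 1)v^*=\delta^\mu(a)\sigma(\xi)$; similarly $\sigma(\xi\cdot b)=\sigma(\xi)\delta^\nu(b)$; and $\langle\sigma(\xi),\sigma(\eta)\rangle=v\langle\xi\otimes 1,\,u^*u(\eta\otimes 1)\rangle v^*=v(\langle\xi,\eta\rangle\otimes 1)v^*=\delta^\nu(\langle\xi,\eta\rangle)$.

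The substantive step is the coaction identity $\bar{\sigma\otimes\id}\circ\sigma=\bar{\id\otimes\delta_G}\circ\sigma$, and here I would lean on three facts about $w_G$. First, $w_G$ commutes with $C_0(G)\otimes 1$ (it is an $M(\cg)$-valued function, hence commutes with the scalar-valued ones), so $\delta^\mu\circ\mu=(\,\cdot\otimes 1)\circ\mu$ and, after pushing the extension bars through the relevant nondegenerate compositions, $\bar{\delta^\mu\otimes\id}(u)=u_{13}$ and likewise $\bar{\delta^\nu\otimes\id}(v)=v_{13}$. Second, $w_G$ implements $\delta_G$: $\bar{\id\otimes\delta_G}(w_G)=w_{G,12}w_{G,13}$, whence $\bar{\id\otimes\delta_G}(u)=u_{12}u_{13}$ and $\bar{\id\otimes\delta_G}(v)=v_{12}v_{13}$. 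Third, $w_{G,12}$ and $w_{G,13}$ commute (their values $u_s\otimes 1$ and $1\otimes u_s$ commute), so $u_{12},u_{13}$ commute and $v_{12},v_{13}$ commute. Granting these, expanding the left side of the coaction identity through the correspondence homomorphism $\sigma\otimes\id$, whose left and right coefficient maps are $\delta^\mu\otimes\id$ and $\delta^\nu\otimes\id$, gives $u_{13}\,u_{12}(\xi\otimes 1\otimes 1)v_{12}^*\,v_{13}^*$, while expanding the right side through $\id\otimes\delta_G$ gives $u_{12}u_{13}(\xi\otimes 1\otimes 1)v_{13}^*v_{12}^*$; the commutation relations of the third fact make these equal. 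I expect this to be the main obstacle, less for any real difficulty than for the bookkeeping: correctly identifying the coefficient homomorphisms of each tensored correspondence homomorphism and justifying the strict-continuity and nondegeneracy manipulations behind the identities $\bar{\delta^\mu\otimes\id}(u)=u_{13}$ and $\bar{\id\otimes\delta_G}(u)=u_{12}u_{13}$.

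Finally I would check coaction-nondegeneracy, $\clspn\{(1\otimes\cg)\cdot\sigma(X)\}=\clspn\{\sigma(X)\cdot(1\otimes\cg)\}=X\otimes\cg$; nondegeneracy of $\sigma$ as a correspondence homomorphism then follows. For the right-hand equality, using $\sigma(\xi)\delta^\nu(b)=\sigma(\xi b)$, $\clspn\{X\cdot B\}=X$, and coaction-nondegeneracy of $\delta^\nu$, one has $\clspn\{\sigma(X)(1\otimes\cg)\}=\clspn\{\sigma(X)(B\otimes\cg)\}=\clspn\{u(X\otimes 1)v^*(B\otimes\cg)\}$; since $v^*$ and $u$ are unitary multipliers of $B\otimes\cg$ and $A\otimes\cg$ this equals $X\otimes\cg$ (drop $v^*$, apply $\clspn\{X\cdot B\}=X$, drop $u$). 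The left-hand equality is symmetric, using $\delta^\mu(a)\sigma(\xi)=\sigma(a\xi)$, $\clspn\{A\cdot X\}=X$, invariance of $A\otimes\cg$ under $u$, and that coaction-nondegeneracy of $\delta^\mu$ holds on both sides (take adjoints). With all three conditions in hand, $(\delta^\mu,\sigma,\delta^\nu)$ is a coaction, that is, $\sigma$ is $\delta^\mu-\delta^\nu$ compatible. (Alternatively one could pass to the linking algebra $L$ of $X$ and recognize $\sigma$ as the upper-right corner of the inner coaction on $L$ implemented by the block-diagonal homomorphism $\mu\oplus\nu\colon C_0(G)\to M(L)$, but I would prefer the direct route, which avoids the linking-algebra/coaction dictionary.)
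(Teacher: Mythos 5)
Your proposal is correct and follows essentially the same route as the paper: a direct verification that $(\delta^\mu,\sigma,\delta^\nu)$ is a correspondence homomorphism, of the coaction identity via the corepresentation property of $w_G$ and leg-numbering, and of two-sided coaction-nondegeneracy using unitarity of $u$ and $v$. The only differences are cosmetic bookkeeping: the paper computes $\bar{\sigma\otimes\id}$ on elementary tensors to get $u_{12}(\,\cdot\,)_{13}v_{12}^*$ (so no commutation of legs is needed), and its nondegeneracy computation moves $u$ past $\mu(C_0(G))\otimes 1$ rather than invoking two-sided nondegeneracy of $\delta^\mu$; both variants are sound.
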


\begin{proof}
Write
\[
u=\bar{\mu\otimes\id}(w_G)\midtext{and}v=\bar{\nu\otimes\id}(w_G).
\]
Then $u\in M(A\otimes \cg)$, $v\in M(B\otimes \cg)$, and
\[
X\otimes 1\subset M(X\otimes \cg),
\]
so certainly $\sigma$ maps into $M(X\otimes \cg)$.

To see that $(\delta,\sigma,\epsilon)$ is a correspondence homomorphism, we compute, for $a\in A$ and $\xi,\eta\in X$:
\begin{align*}
\sigma(a\cdot \xi)
&=u\cdot (a\cdot \xi\otimes 1)\cdot v^*
\\&=u\cdot \bigl((a\otimes 1)\cdot (\xi\otimes 1)\bigr)\cdot v^*
\\&=u(a\otimes 1)u^*u\cdot (\xi\otimes 1)\cdot v^*
\\&=\delta(a)\cdot \sigma(\xi),
\end{align*}
and
\begin{align*}
\<\sigma(\xi),\sigma(\eta)\>
&=\<u\cdot (\xi\otimes 1)\cdot v^*,u\cdot (\eta\otimes 1)\cdot v^*\>
\\&=\<(\xi\otimes 1)\cdot v^*,(\eta\otimes 1)\cdot v^*\>
\\&\hspace{.5in}\text{(because $u$ is unitary)}
\\&=v\<\xi\otimes 1,\eta\otimes 1\>v^*
\\&=v\bigl(\<\xi,\eta\>\otimes 1\bigr)v^*
\\&=\epsilon(\<\xi,\eta\>).
\end{align*}

We show coaction-nondegeneracy:
\begin{align*}
&\clspn\{(1\otimes \cg)\cdot \sigma(X)\}
\\&\quad=\clspn\{(1\otimes \cg)u\cdot (X\otimes 1)\cdot v^*\}
\\&\quad=\clspn\{(1\otimes \cg)u\cdot (\mu(C_0(G)\cdot X\otimes 1)\cdot v^*\}
\\&\quad=\clspn\{(1\otimes \cg)u(\mu(C_0(G))\otimes 1)\cdot (X\otimes 1)\cdot v^*\}
\\&\quad=\clspn\{(1\otimes \cg)(\mu(C_0(G))\otimes 1)u\cdot (X\otimes 1)\cdot v^*\}
\\&\hspace{1in}\text{(because $u\in M(\mu(C_0(G))\otimes \cg)$)}
\\&\quad=\clspn\{(\mu(C_0(G))\otimes \cg)u\cdot (X\otimes 1)\cdot v^*\}
\\&\quad=\clspn\{(\mu(C_0(G))\otimes \cg)\cdot (X\otimes 1)\cdot v^*\}
\\&\hspace{1in}\text{(because $u$ is a unitary multiplier)}
\\&\quad=(X\otimes \cg)\cdot v^*
\\&\quad=(X\otimes \cg),
\end{align*}
because $v$ is unitary,
and similarly
\[
\clspn\{\sigma(X)\cdot (1\otimes \cg)\}=X\otimes \cg.
\]
This also implies that $\sigma$ is nondegenerate as a correspondence homomorphism.

For the coaction identity, we have
\begin{align*}
\bar{\sigma\otimes\id}\circ\sigma(\xi)
&=u_{12}\cdot \sigma(\xi)_{13}\cdot v_{12}^*
\\&=u_{12}u_{13}\cdot (\xi\otimes 1\otimes 1)\cdot v_{13}^*v_{12}^*
\\&=\bar{\id\otimes\delta_G}(u)\cdot \bar{\id\otimes\delta_G}(\xi\otimes 1)\cdot \bar{\id\otimes\delta_G}(v)^*
\\&=\bar{\id\otimes\delta_G}\circ\sigma(\xi),
\end{align*}
where the third equality expresses the fact that $u$ and $v$ are ``corepresentations'' of $C_0(G)$,
and where the first equality follows from linearity, density, strict continuity, and the following computation with an elementary tensor $\eta\otimes c\in X\odot \cg$:
\begin{align*}
\bar{\sigma\otimes\id}(\eta\otimes c)
&=\sigma(\eta)\otimes c
\\&=u\cdot (\eta\otimes 1)\cdot v^*\otimes c
\\&=(u\otimes 1)\cdot (\eta\otimes 1\otimes c)\cdot (v\otimes 1)^*
\\&=u_{12}\cdot (\eta\otimes c)_{13}\cdot v_{12}^*.
\qedhere
\end{align*}
\end{proof}

\begin{defn}
In the situation of \propref{inner}, we call the coaction $\sigma$ on $X$ \emph{inner}, and say that it is \emph{implemented} by the pair $(\mu,\nu)$.
\end{defn}

\begin{cor}\label{unitary}
Let $(X,A)$ be a correspondence,
let $\delta$ be a coaction of $G$ on $A$,
and let $\mu:C_0(G)\to \LL(X)$ be a nondegenerate representation such that the pair $(\varphi_A,\mu)$ is a covariant representation of the coaction $(A,\delta)$.
Define a unitary
\[
u=\bar{\mu\otimes\id}(w_G)\in \LL(X\otimes \cg).
\]
Then there is an $\delta-\delta^1$ compatible coaction $\sigma$ on $X$ given by
\[
\sigma(\xi)=u\cdot (\xi\otimes 1).
\]
\end{cor}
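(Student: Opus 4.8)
The plan is to deduce this from \propref{inner} by regarding $X$ not as an $A$-$A$ correspondence but as a $\KK(X)$-$A$ correspondence. Since every correspondence in this paper is nondegenerate, $\KK(X)\cdot X=X$, so $(\KK(X),X,A)$ is a nondegenerate correspondence, with $\LL(X)=M(\KK(X))$. The given nondegenerate representation $\mu:C_0(G)\to\LL(X)$ is then a nondegenerate homomorphism into $M(\KK(X))$, so it implements an inner coaction $\delta^\mu$ on $\KK(X)$; and for the right-hand data we take the nondegenerate homomorphism $\nu:C_0(G)\to M(A)$, $f\mapsto f(e)1_{M(A)}$, which implements the trivial coaction $\delta^\nu=\delta^1$ on $A$. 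Applying \propref{inner} to $(\KK(X),X,A)$ with this $\mu$ and $\nu$ produces a $\delta^\mu$-$\delta^1$ compatible coaction $\sigma$ on $(\KK(X),X,A)$, given by
\[
\sigma(\xi)=\bar{\mu\otimes\id}(w_G)\cdot(\xi\otimes 1)\cdot\bar{\nu\otimes\id}(w_G)^*.
\]
Since $w_G$ is the function given by the canonical embedding of $G$ into the unitary group of $M(\cg)$ and $\nu$ is evaluation at $e$, we have $\bar{\nu\otimes\id}(w_G)=1$, so this reduces to $\sigma(\xi)=u\cdot(\xi\otimes 1)$, exactly as in the statement.

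It then remains to re-read the conclusion of \propref{inner} as a statement about the $A$-$A$ correspondence $(X,A)$. The coaction identity for $\sigma$, the coaction-nondegeneracy $\clspn\{(1\otimes\cg)\cdot\sigma(X)\}=\clspn\{\sigma(X)\cdot(1\otimes\cg)\}=X\otimes\cg$, the compatibility of $\sigma$ with the right action of $A$, and its compatibility with the $A$-valued inner product are all assertions that do not mention the left coefficient algebra, so they carry over verbatim from \propref{inner}; moreover $\delta$ is a coaction by hypothesis and $\delta^1$ is a coaction, so axiom~(1) of the definition of a correspondence coaction holds. The only point needing work is that the left-action compatibility upgrades from $\delta^\mu$ on $\KK(X)$ to $\delta$ on $A$, i.e., that $\sigma(a\cdot\xi)=\bar{\varphi_A\otimes\id}(\delta(a))\cdot\sigma(\xi)$ for $a\in A$, $\xi\in X$.

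For this, write $a\cdot\xi=\varphi_A(a)\cdot\xi$ with $\varphi_A(a)\in\LL(X)=M(\KK(X))$, and use that $\sigma$, being a correspondence coaction on $(\KK(X),X,A)$ and hence a nondegenerate correspondence homomorphism, extends to multipliers of $\KK(X)$, so that $\sigma(\varphi_A(a)\cdot\xi)=\bar{\delta^\mu}(\varphi_A(a))\cdot\sigma(\xi)$, where $\bar{\delta^\mu}(S)=u(S\otimes 1)u^*$ for $S\in\LL(X)$. The hypothesis that $(\varphi_A,\mu)$ is a covariant representation of $(A,\delta)$ says precisely that
\[
\bar{\varphi_A\otimes\id}\circ\delta(a)=\ad\bar{\mu\otimes\id}(w_G)(\varphi_A(a)\otimes 1)=u(\varphi_A(a)\otimes 1)u^*=\bar{\delta^\mu}(\varphi_A(a)),
\]
and, since $\varphi_{A\otimes\cg}=\varphi_A\otimes\id$, combining the two displays gives $\sigma(a\cdot\xi)=\bar{\varphi_A\otimes\id}(\delta(a))\cdot\sigma(\xi)$, as required.

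I expect no substantive obstacle in carrying this out; the delicate point is purely bookkeeping, namely keeping the various strict extensions to multiplier algebras straight — in particular checking that $\bar{\delta^\mu}(\varphi_A(a))$ and $\bar{\varphi_A\otimes\id}(\delta(a))$ name the same element of $\LL(X\otimes\cg)=M(\KK(X\otimes\cg))$ — and confirming that the data produced for $(\KK(X),X,A)$ legitimately transfers to a correspondence coaction on $(A,X,A)$.
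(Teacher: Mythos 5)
Your proposal is correct and follows exactly the paper's route: regard $X$ temporarily as a $\KK(X)$--$A$ correspondence, apply Proposition~\ref{inner} with $\mu$ implementing an inner coaction on $\KK(X)$ and with $\nu(f)=f(e)1_{M(A)}$ implementing the trivial coaction on $A$ (so that $\bar{\nu\otimes\id}(w_G)=1$ and the formula collapses to $\sigma(\xi)=u\cdot(\xi\otimes 1)$), and then use covariance of $(\varphi_A,\mu)$ to upgrade the left-action compatibility from $\delta^\mu$ to $\delta$. The only difference is that you spell out this last step, which the paper dispatches in one sentence.
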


\begin{proof}
Temporarily regard $X$ as a $\KK(X)-A$ correspondence.
Letting $\delta^\mu$ be the inner coaction on $\KK(X)$ implemented by $\mu$,
by \propref{inner} the formula for $\sigma$ defines a $\delta^\mu-\delta^1$ compatible coaction on $X$.
Since $(\varphi_A,\mu)$ is covariant for $(A,\delta)$, it follows that $\sigma$ is also $\delta-\delta^1$ compatible.
\end{proof}

\begin{cor}\label{commute}
Let $(X,A)$ be a correspondence,
and let $\mu:C_0(G)\to\LL(X)$ be a nondegenerate representation commuting with $\varphi_A$.
Then there is a coaction $\zeta$ of $G$ on $\OO_X$ such that for $\xi\in X$ and $a\in A$ we have
\begin{align*}
\zeta\circ k_X(\xi)&=\bar{k_X\otimes\id}\Bigl(\bar{\mu\otimes 1}(w_G)\cdot (\xi\otimes 1)\Bigr)\\
\zeta\circ k_A(a)&=k_A(a)\otimes 1.
\end{align*}
\end{cor}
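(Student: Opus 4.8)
The plan is to reduce Corollary~\ref{commute} to Corollary~\ref{unitary} followed by Proposition~\ref{coaction}, exactly as the proof of Corollary~\ref{unitary} itself reduced to Proposition~\ref{inner}. First I would equip $A$ with the trivial coaction $\delta^1=\id_A\otimes 1$. Since $\mu:C_0(G)\to\LL(X)$ commutes with $\varphi_A$, the pair $(\varphi_A,\mu)$ is automatically covariant for $(A,\delta^1)$: unwinding the covariance condition, one needs $\bar{\varphi_A\otimes\id}\circ\delta^1(a)=\ad u\,(\varphi_A(a)\otimes 1)$ with $u=\bar{\mu\otimes\id}(w_G)$, i.e. $\varphi_A(a)\otimes 1=u(\varphi_A(a)\otimes 1)u^*$, and this is just the statement that $\varphi_A(a)\otimes 1$ commutes with $u$. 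Because $u$ lies in the strict closure of $\mu(C_0(G))\otimes C^*(G)$ and $\mu$ commutes with $\varphi_A$, $\varphi_A(a)\otimes 1$ commutes with every $\mu(f)\otimes b$, hence with $u$; a short strict-continuity/density argument makes this precise.

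Next I would apply Corollary~\ref{unitary} with $\delta=\delta^1$ to obtain a $\delta^1$--$\delta^1$ compatible coaction $\sigma$ on $X$ given by $\sigma(\xi)=u\cdot(\xi\otimes 1)$. To feed this into Proposition~\ref{coaction} I must check its hypothesis $\delta^1(J_X)\subset M(A\otimes C^*(G);J_X\otimes C^*(G))$. But $\delta^1(a)=a\otimes 1$, and for $a\in J_X$ we have $(a\otimes 1)(A\otimes C^*(G))\subset J_X\odot C^*(G)\subset J_X\otimes C^*(G)$ (and symmetrically), since $J_X$ is an ideal of $A$; so $a\otimes 1\in M(A\otimes C^*(G);J_X\otimes C^*(G))$ and the hypothesis holds trivially. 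Proposition~\ref{coaction} then produces a coaction $\zeta$ on $\OO_X$ with $\zeta\circ k_X=\bar{k_X\otimes\id}\circ\sigma$ and $\zeta\circ k_A=\bar{k_A\otimes\id}\circ\delta^1$. Reading off these two identities gives exactly the displayed formulas in the statement (writing $\bar{\mu\otimes 1}$ for $\bar{\mu\otimes\id}$ as in the statement, and noting $\bar{k_A\otimes\id}\circ\delta^1(a)=\bar{k_A\otimes\id}(a\otimes 1)=k_A(a)\otimes 1$).

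The only genuinely non-routine point is the covariance of $(\varphi_A,\mu)$ for the trivial coaction, i.e. that $u$ commutes with $\varphi_A(A)\otimes 1$; everything else is bookkeeping with results already established. I expect this to be the main (mild) obstacle, and I would handle it by the approximate-identity/strict-density argument sketched above: $w_G$ is a strict limit of finite sums $\sum f_i\otimes b_i$ with $f_i\in C_0(G)$, $b_i\in C^*(G)$, so $u=\bar{\mu\otimes\id}(w_G)$ is a strict limit of $\sum \mu(f_i)\otimes b_i$, each term of which commutes with $\varphi_A(a)\otimes 1$ by hypothesis, and strict continuity of multiplication on bounded sets in $\LL(X\otimes C^*(G))$ finishes it. One should note here that we are temporarily viewing $X$ as a $\KK(X)$--$A$ correspondence exactly as in the proof of Corollary~\ref{unitary}, so $\mu$ genuinely lands in the left-coefficient multiplier algebra $\LL(X)=M(\KK(X))$, which is what lets us invoke Corollary~\ref{unitary}; the passage back to $A$ as coefficient algebra is harmless since the coaction $\zeta$ on $\OO_X$ and its defining formulas do not see which coefficient algebra we used on the correspondence side.
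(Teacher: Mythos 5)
Your proposal is correct and follows essentially the same route as the paper: take $\delta$ to be the trivial coaction $\delta^1$, note that commutation of $\mu$ with $\varphi_A$ gives the covariance hypothesis of Corollary~\ref{unitary}, and then apply Proposition~\ref{coaction}, whose hypothesis holds because $\delta^1(J_X)=J_X\otimes 1\subset M(A\otimes C^*(G);J_X\otimes C^*(G))$. The only difference is that you spell out the strict-density argument for why $u$ commutes with $\varphi_A(A)\otimes 1$, which the paper leaves implicit.
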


\begin{proof}
Since $\mu$ commutes with $\varphi_A$,
the hypotheses of \corref{unitary} are satisfied when $\delta$ is taken to be the trivial coaction $\delta^1$, and we let $\sigma$ be the resulting $\delta^1-\delta^1$ compatible coaction on $X$.
Then \propref{coaction} gives a suitable coaction $\zeta$ of $G$ on $\OO_X$,
because
the trivial coaction $\delta^1$ 
maps $J_X$ into
\[
J_X\otimes 1\subset M(A\otimes \cg;J_X\otimes \cg).
\qedhere
\]
\end{proof}

\section{Crossed products}\label{crossed products}

\begin{lem}\label{covariant}
Let $(\sigma,\delta)$ be a coaction of $G$ on a correspondence $(X,A)$
such that $\delta(J_X)\subset M(A\otimes C^*(G);J_X\otimes C^*(G))$, and
let $(\psi,\pi,\mu):(X,A,C_0(G))\to M(B)$ be a $(\sigma,\delta)$-covariant homomorphism, with $(\psi,\pi)$ Cuntz-Pimsner covariant.
Then the pair
\[
(\psi\times\pi,\mu):(\OO_X,C_0(G))\to M(B)
\]
is covariant for the associated coaction $\zeta$ of $G$ on $\OO_X$.
\end{lem}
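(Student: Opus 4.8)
The statement to prove is that if $(\psi,\pi,\mu)$ is $(\sigma,\delta)$-covariant with $(\psi,\pi)$ Cuntz--Pimsner covariant, then $(\psi\times\pi,\mu)$ is covariant for $\zeta$ on $\OO_X$. By definition of the crossed-product covariance condition for a $C^*$-coaction, we must verify
\[
\bar{(\psi\times\pi)\otimes\id}\circ\zeta(m)=\ad\bar{\mu\otimes\id}(w_G)\bigl((\psi\times\pi)(m)\otimes 1\bigr)
\quad\text{for all }m\in\OO_X.
\]
Both sides are homomorphisms (or at least strictly continuous maps) $\OO_X\to M(B\otimes\cg)$, so by the universal property of $\OO_X$ as generated by $k_X(X)$ and $k_A(A)$ it suffices to check the identity separately on elements of the form $k_A(a)$ and $k_X(\xi)$.

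\emph{First I would handle the $k_A$ part.} Here $\zeta\circ k_A=\bar{k_A\otimes\id}\circ\delta$ by the defining diagram of \propref{coaction}, and $(\psi\times\pi)\circ k_A=\pi$. So the left side becomes $\bar{\pi\otimes\id}\circ\delta(a)$, and the desired right side is $\ad\bar{\mu\otimes\id}(w_G)(\pi(a)\otimes 1)$; these agree precisely because $(\pi,\mu)$ is covariant for $(A,\delta)$, which is hypothesis (i) of the definition of $(\sigma,\delta)$-covariance.

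\emph{Next the $k_X$ part.} On $k_X(\xi)$, the defining diagram for $\zeta$ gives $\zeta\circ k_X(\xi)=\bar{k_X\otimes\id}\circ\sigma(\xi)$, and $(\psi\times\pi)\circ k_X=\psi$. Applying $\bar{(\psi\times\pi)\otimes\id}$ and using that $(\psi\times\pi)\circ k_X=\psi$ (extended appropriately to multipliers, since $\sigma(\xi)\in M_{\cg}(X\otimes\cg)$ and the homomorphisms are nondegenerate), the left side becomes $\bar{\psi\otimes\id}\circ\sigma(\xi)$. By condition (ii) of $(\sigma,\delta)$-covariance (in the $A=B$, $\mu=\nu$ form), this equals $\bar{\mu\otimes\id}(w_G)\cdot(\psi(\xi)\otimes1)\cdot\bar{\mu\otimes\id}(w_G)^*$, which is exactly $\ad\bar{\mu\otimes\id}(w_G)\bigl((\psi\times\pi)(k_X(\xi))\otimes1\bigr)$, as wanted. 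A small point to state carefully: since $\ad\bar{\mu\otimes\id}(w_G)$ is a $*$-automorphism of $M(B\otimes\cg)$ and both $\bar{(\psi\times\pi)\otimes\id}\circ\zeta$ and $m\mapsto\ad\bar{\mu\otimes\id}(w_G)((\psi\times\pi)(m)\otimes1)$ are nondegenerate homomorphisms agreeing on the generating set $k_X(X)\cup k_A(A)$, they agree on all of $\OO_X$.

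\emph{Main obstacle.} The only genuinely delicate step is bookkeeping with the multiplier extensions: $\sigma$ and $\delta$ land in the relative multiplier algebras $M_{\cg}(X\otimes\cg)$ and $M_{\cg}(A\otimes\cg)$ rather than in $X\otimes\cg$ itself, so one must make sure that $\bar{(\psi\times\pi)\otimes\id}$ is defined there (it is, because $(\psi\times\pi)\times\mu$ being nondegenerate lets us extend $(\psi\times\pi)\otimes\id$ to $M_{\cg}(\OO_X\otimes\cg)$, and $\zeta$ factors through that algebra by \propref{coaction}) and that the composite $\bar{(\psi\times\pi)\otimes\id}\circ\bar{k_X\otimes\id}$ really does restrict to $\bar{\psi\otimes\id}$ on $\sigma(X)$ — this follows from functoriality of the strict extension together with $(\psi\times\pi)\circ k_X=\psi$. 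Once those identifications are in place the verification is the routine two-generator computation above.
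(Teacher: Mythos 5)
Your proof is correct and follows essentially the same route as the paper's: reduce to the generators $k_A(a)$ and $k_X(\xi)$, use the defining diagram of $\zeta$ from \propref{coaction} to rewrite $\zeta\circ k_A$ and $\zeta\circ k_X$, and then invoke the two clauses of $(\sigma,\delta)$-covariance. The extra care you take with the strict/multiplier extensions is a detail the paper leaves implicit, but it changes nothing in substance.
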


\begin{proof}
$\pi$ and $\mu$ are nondegenerate, hence so is $\psi\times\pi$.
Let $u=\bar{\mu\otimes\id}(w_G)$. We must show that for $x\in \OO_X$ we have
\[
\bar{(\psi\times\pi)\otimes\id}\circ\zeta(x)
=\ad u\bigl((\psi\times\pi)(x)\otimes 1),
\]
and it suffices to show this 
on generators
$k_X(\xi)$ and $k_A(a)$ for $\xi\in X$ and $a\in A$.
For for the first, we have
\begin{align*}
\bar{(\psi\times\pi)\otimes\id}\circ\zeta\circ k_X(\xi)
&=\bar{(\psi\times\pi)\otimes\id}\circ\bar{k_X\otimes\id}\circ\sigma(\xi)
\\&=\bar{\psi\otimes\id}\circ\sigma(\xi)
\\&=u \bigl(\psi(\xi)\otimes 1\bigr)u^*
\\&=\ad u\bigl((\psi\times\pi)\circ k_X(\xi)\otimes 1\bigr),
\end{align*}
and for the second,
\begin{align*}
\bar{(\psi\times\pi)\otimes\id}\circ\zeta\circ k_A(a)
&=\bar{(\psi\times\pi)\otimes\id}\circ\bar{k_A\otimes\id}\circ\delta(a)
\\&=\bar{\pi\otimes\id}\circ\delta(a)
\\&=\ad u\bigl(\pi(a)\otimes 1\bigr)
\\&=\ad u\bigl((\psi\times\pi)\circ k_A(a)\otimes 1\bigr).
\qedhere
\end{align*}
\end{proof}

\begin{lem}\label{composition}
Let $(\sigma,\delta)$ be a coaction of $G$ on a correspondence $(X,A)$,
let $(\psi,\pi,\mu):(X,A,C_0(G))\to (M_B(Y),M(B))$ be a $(\sigma,\delta)$-covariant correspondence homomorphism, and
let $(\rho,\tau):(Y,B)\to (M_D(Z),M(D))$ be a correspondence homomorphism with $\tau$ nondegenerate.
Then the composition
\[
\bigl(\bar\rho\circ\psi,\bar\tau\circ\pi,\bar\tau\circ\mu\bigr):(X,A,C_0(G))\to (M_D(Z),M(D))
\]
is covariant for $(\sigma,\delta)$.
\end{lem}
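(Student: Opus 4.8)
The plan is to push the two defining identities of $(\sigma,\delta)$-covariance for $(\psi,\pi,\mu)$ forward through the correspondence homomorphism obtained by extending $(\rho,\tau)$ to multipliers. First I would make $\bar\rho$ and $\bar\tau$ precise. Since $\tau\colon B\to M(D)$ is a nondegenerate homomorphism, taking $\lambda=\tau$ (regarded as a nondegenerate homomorphism into $M(\overline{\tau(B)})$) verifies the hypothesis $\tau(bb')=\lambda(b)\tau(b')$ of the relative-multiplier extension theorem \cite[Proposition~A.11]{dkq} with $\kappa=\id_B$, so $(\rho,\tau)$ extends uniquely to a $B$-strict to $D$-strict continuous correspondence homomorphism $(\bar\rho,\bar\tau)\colon(M_B(Y),M(B))\to(M_D(Z),M(D))$. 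Because $\psi(X)\subset M_B(Y)$, the composite $\bar\rho\circ\psi$ is defined, $(\bar\rho\circ\psi,\bar\tau\circ\pi)$ is a correspondence homomorphism $(X,A)\to(M_D(Z),M(D))$, and $\bar\tau\circ\pi$ and $\bar\tau\circ\mu$ are nondegenerate (composites of nondegenerate homomorphisms). Tensoring everything with $\cg$ and invoking \cite[Proposition~A.11]{dkq} once more over $\cg$ yields $\cg$-strictly continuous extensions $\bar{\bar\rho\otimes\id}$ and $\bar{\bar\tau\otimes\id}$; by uniqueness of strictly continuous extensions one gets $\bar{\bar\rho\otimes\id}\circ\bar{\psi\otimes\id}=\bar{(\bar\rho\circ\psi)\otimes\id}$, $\bar{\bar\tau\otimes\id}\circ\bar{\pi\otimes\id}=\bar{(\bar\tau\circ\pi)\otimes\id}$, and $\bar{\bar\tau\otimes\id}\circ\bar{\mu\otimes\id}=\bar{(\bar\tau\circ\mu)\otimes\id}$ (each pair of maps agrees on the appropriate algebraic tensor product), together with the standard compatibilities $\bar{\bar\tau\otimes\id}(m\otimes 1)=\bar\tau(m)\otimes 1$ for $m\in M(B)$, and $\bar{\bar\rho\otimes\id}(c\cdot n)=\bar{\bar\tau\otimes\id}(c)\cdot\bar{\bar\rho\otimes\id}(n)$ and symmetrically on the right.

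For the covariance of the pair $(\bar\tau\circ\pi,\bar\tau\circ\mu)$ for $(A,\delta)$, I would apply the $*$-homomorphism $\bar{\bar\tau\otimes\id}$ to the identity $\bar{\pi\otimes\id}\circ\delta(a)=\ad\bar{\mu\otimes\id}(w_G)\bigl(\pi(a)\otimes 1\bigr)$ expressing covariance of $(\pi,\mu)$. The left side becomes $\bar{(\bar\tau\circ\pi)\otimes\id}\circ\delta(a)$, and the right side becomes $\ad\bar{\bar\tau\otimes\id}\bigl(\bar{\mu\otimes\id}(w_G)\bigr)\bigl(\bar\tau(\pi(a))\otimes 1\bigr)=\ad\bar{(\bar\tau\circ\mu)\otimes\id}(w_G)\bigl((\bar\tau\circ\pi)(a)\otimes 1\bigr)$, which is exactly the covariance identity for $(\bar\tau\circ\pi,\bar\tau\circ\mu)$.

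For the module covariance identity, I would apply the module map $\bar{\bar\rho\otimes\id}$ to $\bar{\psi\otimes\id}\circ\sigma(\xi)=\bar{\mu\otimes\id}(w_G)\cdot\bigl(\psi(\xi)\otimes 1\bigr)\cdot\bar{\mu\otimes\id}(w_G)^*$. Using the naturality and coefficient-compatibility relations above, the left side becomes $\bar{(\bar\rho\circ\psi)\otimes\id}\circ\sigma(\xi)$, while on the right the outer factors become $\bar{(\bar\tau\circ\mu)\otimes\id}(w_G)$ and its adjoint and the middle factor becomes $(\bar\rho\circ\psi)(\xi)\otimes 1$, so the right side becomes $\bar{(\bar\tau\circ\mu)\otimes\id}(w_G)\cdot\bigl((\bar\rho\circ\psi)(\xi)\otimes 1\bigr)\cdot\bar{(\bar\tau\circ\mu)\otimes\id}(w_G)^*$. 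This is precisely the module covariance condition for the triple $(\bar\rho\circ\psi,\bar\tau\circ\pi,\bar\tau\circ\mu)$, completing the proof.

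The main obstacle is confined to the first paragraph: one must verify that $\bar\rho$, and its further extension after tensoring with $\cg$, is a well-defined module-multiplier map compatible with $\bar\tau$, and in particular is defined on the multiplier module in which $\bar{\psi\otimes\id}\circ\sigma(\xi)$ lives, so that ``apply $\bar{\bar\rho\otimes\id}$ to both sides'' is legitimate. Once this relative-multiplier bookkeeping is settled, the two covariance conditions drop out by applying a single homomorphism (resp.\ module map) to the two hypotheses, as above.
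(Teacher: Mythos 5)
Your proposal is correct and follows essentially the same route as the paper's proof: both push the two covariance identities for $(\psi,\pi,\mu)$ forward by applying the strictly continuous extensions of $\tau\otimes\id$ and $\rho\otimes\id$, the only difference being that you spell out the relative-multiplier bookkeeping (via \cite[Proposition~A.11]{dkq}) that the paper dismisses as ``routine calculations'' and ``the standard theory of $C^*$-coactions.'' No gap.
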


\begin{proof}
First of all, since $\pi$, $\mu$, and $\tau$ are nondegenerate, $\bar\tau\circ\pi$ is also nondegenerate, and $(\bar\tau\circ\pi,\bar\tau\circ\mu)$ is covariant for $(A,\delta)$ by the standard theory of $C^*$-coactions.

Routine calculations show that
\[
(\bar\rho\circ\psi,\bar\tau\circ\pi):(X,A)\to (M(Z),M(D))
\]
is a correspondence homomorphism.
Also, since $\psi$ and $\rho$ map into $M_B(Y)$ and $M_D(Z)$, respectively, it is easy to see that $\bar\rho\circ\psi$ maps $X$ into $M_D(Z)$.

Letting $u=\bar{\bar\tau\circ\mu\otimes\id}(w_G)$, 
the following calculation completes the proof:
for $\xi\in X$ we have
\begin{align*}
\bar{(\bar\tau\circ\psi)\otimes\id}\circ\sigma(\xi)
&=\bar{\tau\otimes\id}\circ\bar{\psi\otimes\id}\circ\sigma(\xi)
\\&=\bar{\tau\otimes\id}\Bigl(\bar{\mu\otimes\id}(w_G)\cdot \bigl(\psi(\xi)\otimes 1\bigr)\cdot\bar{\mu\otimes\id}(w_G)^*\Bigr)
\\&=u\cdot \bigl(\bar\tau\circ\psi(\xi)\otimes 1\bigr)\cdot u^*.
\qedhere
\end{align*}
\end{proof}

\begin{cor}\label{covariant CP}
Let $(\sigma,\delta)$ be a coaction of $G$ on a correspondence $(X,A)$
such that $\delta(J_X)\subset M(A\otimes C^*(G);J_X\otimes C^*(G))$, and
let $(\psi,\pi,\mu):(X,A,C_0(G))\to (M_B(Y),M(B))$ be a $(\sigma,\delta)$-covariant homomorphism, with $(\psi,\pi)$ Cuntz-Pimsner covariant.
Then the pair
\[
(\OO_{\psi,\pi},\bar{k_B}\circ\mu):(\OO_X,C_0(G))\to M(\OO_Y)
\]
is covariant for the associated coaction $\zeta$.
\end{cor}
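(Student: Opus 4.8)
The goal is to show that the pair $(\OO_{\psi,\pi},\bar{k_B}\circ\mu)$ is a covariant homomorphism for the coaction $\zeta$ on $\OO_X$ produced by \propref{coaction}. Since a covariant homomorphism of a coaction is itself a kind of crossed-product-type datum, and since we have just built the machinery of $(\sigma,\delta)$-covariant homomorphisms and shown in \lemref{covariant} how such data yield covariant pairs on $\OO_X$, the natural plan is to manufacture an appropriate $(\sigma,\delta)$-covariant homomorphism into $M(\OO_Y)$ out of $(\psi,\pi,\mu)$ and $(k_Y,k_B)$, and then invoke \lemref{covariant}.

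**Plan.** First I would apply \lemref{composition} with $(\rho,\tau)=(k_Y,k_B):(Y,B)\to(M_{?}(\OO_Y),M(\OO_Y))$; note $k_B$ is nondegenerate, so the lemma gives that the composition $(\bar{k_Y}\circ\psi,\bar{k_B}\circ\pi,\bar{k_B}\circ\mu):(X,A,C_0(G))\to M(\OO_Y)$ is covariant for $(\sigma,\delta)$. Second, I need to know that the underlying correspondence homomorphism $(\bar{k_Y}\circ\psi,\bar{k_B}\circ\pi)$ is Cuntz-Pimsner covariant; this is exactly what is granted by \cite[Corollary~3.6]{KQRCorrespondenceFunctor}, since $(\psi,\pi)$ is Cuntz-Pimsner covariant, and moreover that corollary identifies $(\bar{k_Y}\circ\psi)\times(\bar{k_B}\circ\pi)$ with $\OO_{\psi,\pi}$ via the commuting square
\[
\xymatrix{
X \ar[r]^-{(\psi,\pi)} \ar[d]_{k_X}
&M_B(Y) \ar[d]^{\bar{k_Y}}
\\
\OO_X \ar[r]_-{\OO_{\psi,\pi}}
&M_B(\OO_Y).
}
\]
Third, with these two facts in hand, \lemref{covariant} applied to the $(\sigma,\delta)$-covariant Cuntz-Pimsner covariant homomorphism $(\bar{k_Y}\circ\psi,\bar{k_B}\circ\pi,\bar{k_B}\circ\mu)$ yields that the pair $\bigl((\bar{k_Y}\circ\psi)\times(\bar{k_B}\circ\pi),\ \bar{k_B}\circ\mu\bigr)=(\OO_{\psi,\pi},\bar{k_B}\circ\mu)$ is covariant for $\zeta$, which is the claim.

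**Main obstacle.** The one point requiring care is the bookkeeping around which "relative multiplier" target the various maps land in, so that \lemref{composition} and \lemref{covariant} both literally apply: \lemref{covariant} as stated sends $(\psi,\pi,\mu)$ into $M(B)$, whereas here the data land in $M_B(Y)\subset M(Y)$, so I would either observe that $\bar{k_Y}\circ\psi$ and $\bar{k_B}\circ\pi$ compose to land in $M(\OO_Y)$ (an honest multiplier algebra, to which \lemref{covariant} applies verbatim) after passing through $k_Y,k_B$, or note that the proof of \lemref{covariant} only used the generator-by-generator identity which persists in the relative-multiplier setting. A secondary routine check is that \lemref{composition}'s hypothesis "$\tau$ nondegenerate" is met by $\tau=k_B$; this is part of the standard Cuntz-Pimsner formalism. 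Everything else is a direct citation of \lemref{covariant}, \lemref{composition}, and \cite[Corollary~3.6]{KQRCorrespondenceFunctor}, so I expect the proof to be short.
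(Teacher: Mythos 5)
Your proof is correct and follows essentially the same route as the paper's: apply \lemref{composition} with $(\rho,\tau)=(k_Y,k_B)$ to get $(\sigma,\delta)$-covariance of the composed triple, check Cuntz-Pimsner covariance of $(\bar{k_Y}\circ\psi,\bar{k_B}\circ\pi)$ via the functoriality results of \cite{KQRCorrespondenceFunctor}, invoke \lemref{covariant} with $B=\OO_Y$, and identify $(\bar{k_Y}\circ\psi)\times(\bar{k_B}\circ\pi)$ with $\OO_{\psi,\pi}$. The only cosmetic difference is that the paper attributes the Cuntz-Pimsner covariance of the composition to \cite[Theorem~3.5]{KQRCorrespondenceFunctor} and reserves \cite[Corollary~3.6]{KQRCorrespondenceFunctor} for the identification with $\OO_{\psi,\pi}$, whereas you cite the corollary for both.
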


\begin{proof}
Applying \lemref{composition} to the Toeplitz representation $(k_Y,k_B):(Y,B)\to \OO_Y$, we see that
\[
(\bar{k_Y}\circ\psi,\bar{k_B}\circ\pi,\bar{k_B}\circ\mu):(X,A,C_0(G))\to M(\OO_Y)
\]
is covariant for $(\sigma,\delta)$.

By 
\cite[Theorem~3.5]{KQRCorrespondenceFunctor}
the composition 
$(\bar{k_Y}\circ\psi,\bar{k_B}\circ\pi)$ is a Cuntz-Pimsner-covariant Toeplitz representation of $(X,A)$ in $M(\OO_Y)$.
Then, since $(\psi,\pi)$ is Cuntz-Pimsner covariant, \lemref{covariant} with $B=\OO_Y$ tells us that
\[
\bigl((\bar{k_Y}\circ\psi)\times (\bar{k_B}\circ\pi),\bar{k_B}\circ\mu\bigr):
(\OO_X,C_0(G))\to M(\OO_Y)
\]
is $\zeta$-covariant.
But by construction (see \cite[Corollary~3.6]{KQRCorrespondenceFunctor}) we have
\[
(\bar{k_Y}\circ\psi)\times (\bar{k_B}\circ\pi)=\OO_{\psi,\pi}.
\qedhere
\]
\end{proof}

\begin{thm}\label{crossed}
Let $(\sigma,\delta)$ be a
coaction of $G$ on a correspondence $(X,A)$
such that $\delta(J_X)\subset M(A\otimes C^*(G);J_X\otimes C^*(G))$, and let $\zeta$ be the associated coaction on $\OO_X$, as in \propref{coaction}.
If the canonical correspondence homomorphism
\[
(j_X,j_A):(X,A)\to (M(X\rtimes_\sigma G),M(A\rtimes_\delta G))
\]
is Cuntz-Pimsner covariant,
then
\[
 \OO_X\rtimes_\zeta G\cong\OO_{X\rtimes_\sigma G}.
\]
\end{thm}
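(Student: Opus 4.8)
The plan is to write down mutually inverse homomorphisms between $\OO_X\rtimes_\zeta G$ and $\OO_{X\rtimes_\sigma G}$; in effect this amounts to checking that both algebras are universal for the same data, namely a $(\sigma,\delta)$-covariant homomorphism $(\psi,\pi,\mu)\colon (X,A,C_0(G))\to M(B)$ whose correspondence part $(\psi,\pi)$ is Cuntz-Pimsner covariant. Throughout I would write $(j_{\OO_X},j_G^{\OO_X})$ for the canonical covariant pair of the coaction $(\OO_X,\zeta)$ and $(j_A,j_X,j_G^A)$ for the canonical maps into the crossed-product correspondence of \cite[Proposition~3.9]{enchilada}.

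For the map $\OO_X\rtimes_\zeta G\to\OO_{X\rtimes_\sigma G}$, I would first observe that $(j_X,j_A,j_G^A)$ is $(\sigma,\delta)$-covariant: this is the correspondence analog of the fact that the regular representation $\bigl((\id\otimes\lambda)\circ\delta,1\otimes M\bigr)$ is covariant for $(A,\delta)$, and should follow from a direct computation with $w_G$, $\lambda$ and the multiplication representation $M$. Since also $j_X(X)\subset M_{A\rtimes_\delta G}(X\rtimes_\sigma G)$ (noted in the proof of \lemref{j CP}) and $(j_X,j_A)$ is Cuntz-Pimsner covariant by hypothesis, \corref{covariant CP} applies with $(Y,B)=(X\rtimes_\sigma G,A\rtimes_\delta G)$ and shows that $\bigl(\OO_{j_X,j_A},\bar{k_{A\rtimes_\delta G}}\circ j_G^A\bigr)$ is $\zeta$-covariant. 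Integrating via the universal property of the crossed product gives a nondegenerate homomorphism $\Phi\colon\OO_X\rtimes_\zeta G\to M(\OO_{X\rtimes_\sigma G})$ with $\Phi\circ j_{\OO_X}=\OO_{j_X,j_A}$ and $\Phi\circ j_G^{\OO_X}=\bar{k_{A\rtimes_\delta G}}\circ j_G^A$. Using $\OO_{j_X,j_A}\circ k_X=\bar{k_{X\rtimes_\sigma G}}\circ j_X$ and $\OO_{j_X,j_A}\circ k_A=\bar{k_{A\rtimes_\delta G}}\circ j_A$, together with the descriptions $X\rtimes_\sigma G=\clspn\{j_X(X)j_G^A(C_0(G))\}$ and $A\rtimes_\delta G=\clspn\{j_A(A)j_G^A(C_0(G))\}$, I would check that the image of $\Phi$ contains $k_{X\rtimes_\sigma G}(X\rtimes_\sigma G)$ and $k_{A\rtimes_\delta G}(A\rtimes_\delta G)$, hence is dense in $\OO_{X\rtimes_\sigma G}$; so $\Phi$ is in fact a surjection onto $\OO_{X\rtimes_\sigma G}$.

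For the reverse map I would use the defining relations $\zeta\circ k_X=\bar{k_X\otimes\id}\circ\sigma$ and $\zeta\circ k_A=\bar{k_A\otimes\id}\circ\delta$ from \propref{coaction}, together with $\zeta$-covariance of $(j_{\OO_X},j_G^{\OO_X})$, to see that $\bigl(j_{\OO_X}\circ k_X,\ j_{\OO_X}\circ k_A,\ j_G^{\OO_X}\bigr)$ is a $(\sigma,\delta)$-covariant homomorphism of $(X,A,C_0(G))$ into $M(\OO_X\rtimes_\zeta G)$, while $(j_{\OO_X}\circ k_X,j_{\OO_X}\circ k_A)$ is Cuntz-Pimsner covariant as the composition of the universal representation $(k_X,k_A)$ with the nondegenerate homomorphism $j_{\OO_X}$. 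Feeding this covariant datum through the integrated form for the crossed-product correspondence --- sending $j_X(\xi)j_G^A(f)\mapsto(j_{\OO_X}\circ k_X)(\xi)\,j_G^{\OO_X}(f)$ and $j_A(a)j_G^A(f)\mapsto(j_{\OO_X}\circ k_A)(a)\,j_G^{\OO_X}(f)$ --- should yield a Cuntz-Pimsner covariant correspondence homomorphism $(\Psi_X,\Psi_A)\colon(X\rtimes_\sigma G,A\rtimes_\delta G)\to M(\OO_X\rtimes_\zeta G)$, where the hypothesis $j_A(J_X)\subset M(A\rtimes_\delta G;J_{X\rtimes_\sigma G})$ (equivalent, by \lemref{j CP}, to Cuntz-Pimsner covariance of $(j_X,j_A)$) is exactly what is needed to control $J_{X\rtimes_\sigma G}$. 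Then \cite[Corollary~3.6]{KQRCorrespondenceFunctor} produces $\Psi\colon\OO_{X\rtimes_\sigma G}\to M(\OO_X\rtimes_\zeta G)$, and comparing $\Phi$ and $\Psi$ on the respective generating sets shows $\Phi\circ\Psi=\id$ and $\Psi\circ\Phi=\id$; in particular $\Psi$ lands in $\OO_X\rtimes_\zeta G$ and $\Phi$ is the desired isomorphism.

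I expect the main obstacle to be the ``integrated form'' step in the reverse direction: one must show that $j_X(\xi)j_G^A(f)\mapsto(j_{\OO_X}\circ k_X)(\xi)\,j_G^{\OO_X}(f)$ is well defined on $X\rtimes_\sigma G$ and that the resulting representation of $X\rtimes_\sigma G$ is \emph{Cuntz-Pimsner} covariant, which forces one to combine the concrete model $X\rtimes_\sigma G\subset M(X\otimes\KK(L^2(G)))$ of \cite{enchilada}, the $(\sigma,\delta)$-covariance, and the standing Cuntz-Pimsner covariance hypothesis on $(j_X,j_A)$. A lesser but still nontrivial point is verifying $(\sigma,\delta)$-covariance of the canonical triple $(j_X,j_A,j_G^A)$, which should mirror the familiar $C^*$-algebra computation with the regular representation.
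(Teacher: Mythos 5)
Your construction of the forward map is exactly the paper's: you verify that $(j_X,j_A,j_G^A)$ is $(\sigma,\delta)$-covariant by the regular-representation computation, invoke \corref{covariant CP} with $(Y,B)=(X\rtimes_\sigma G,A\rtimes_\delta G)$, integrate to get $\Pi=\OO_{j_X,j_A}\times\bigl(\bar{k_{A\rtimes_\delta G}}\circ j_G\bigr)$, and check on generators that the image is all of $\OO_{X\rtimes_\sigma G}$. That half is fine.

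The gap is in your reverse map, precisely at the step you flag as the ``main obstacle.'' To get $\Psi$ out of $\OO_{X\rtimes_\sigma G}$ via \cite[Corollary~3.6]{KQRCorrespondenceFunctor}, the representation $(\Psi_X,\Psi_A)$ must satisfy the Cuntz--Pimsner relation $\Psi_X^{(1)}\circ\varphi_{A\rtimes_\delta G}=\Psi_A$ on \emph{all} of the Katsura ideal $J_{X\rtimes_\sigma G}$. The hypothesis $j_A(J_X)\subset M(A\rtimes_\delta G;J_{X\rtimes_\sigma G})$ does not ``control'' $J_{X\rtimes_\sigma G}$ in the way you need: it is a containment in the wrong direction, guaranteeing only that the ideal $I=\clspn\{(A\rtimes_\delta G)j_A(J_X)(A\rtimes_\delta G)\}$ sits \emph{inside} $J_{X\rtimes_\sigma G}$. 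Nothing in the hypotheses rules out $J_{X\rtimes_\sigma G}$ being strictly larger than $I$, and on the excess your covariant datum gives you no handle (even under amenability the paper only proves $I\subset J_{X\rtimes_\sigma G}$ in \corref{implies CP}, never equality). A secondary unresolved point is the well-definedness and boundedness of $\Psi_X$ on $\clspn\{j_X(X)\cdot j_G^A(C_0(G))\}$, since the crossed-product correspondence of \cite{enchilada} has no abstract universal property; that part is probably a manageable inner-product computation, but the $J_{X\rtimes_\sigma G}$ issue is structural. The paper circumvents the inverse entirely: it proves $\Pi$ injective by Landstad duality \cite[Theorem~3.1]{QuiggLandstadDuality}, showing (a) $\Pi\circ j_{\OO_X}=\OO_{j_X,j_A}$ is faithful via the Gauge-Invariant Uniqueness Theorem (using that $j_A$ is faithful and the gauge action intertwines), and (b) $\Pi$ is $\what\zeta-\alpha$ equivariant for the action $\alpha$ on $\OO_{X\rtimes_\sigma G}$ induced by right translation in the $j_G$-copy of $C_0(G)$. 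You should replace your inverse-map argument with an injectivity argument of this kind.
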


\begin{rem}
We do not know whether the hypothesis of Cuntz-Pimsner covariance of $(j_X,j_A)$ is redundant; in \corref{implies CP} below we will show that it is satisfied under certain conditions.
\end{rem}

\begin{proof}[Proof of \thmref{crossed}]
Our strategy is to construct a covariant homomorphism
\[
(\rho,\mu):(\OO_X,C_0(G))\to M(\OO_{X\rtimes_\sigma G}),
\]
and show that the integrated form $\rho\times\mu$ is an isomorphism of $\OO_X\rtimes_\zeta G$ onto $\OO_{X\rtimes_\sigma G}$.
For the covariant homomorphism we will need a homomorphism of $\OO_X$, and to get this we will apply functoriality:
since $(j_X,j_A)$ is Cuntz-Pimsner covariant, by \cite[Corollary~3.6]{KQRCorrespondenceFunctor}
there is a unique nondegenerate homomorphism
\[
\OO_{j_X,j_A}:\OO_X\to M(\OO_{X\rtimes_\sigma G})
\]
making the diagram
\[
\xymatrix@C+30pt{
(X,A) \ar[r]^-{(j_X,j_A)} \ar[d]_{(k_A,k_A)}
&(M_{A\rtimes_\delta G}(X\rtimes_\sigma G),M(A\rtimes_\delta G))
\ar[d]^{(\bar{k_{X\rtimes_\sigma G}},\bar{k_{A\rtimes_\delta G}})}
\\\
\OO_X \ar[r]_{\OO_{j_X,j_A}}
&M(\OO_{X\rtimes_\sigma G})
}
\]
commute.

We next show that $(j_X,j_A,j_G)$ is covariant for $(\sigma,\delta)$:
\begin{align*}
\bar{j_X\otimes\id}\circ\sigma
&=\bar{\bigl(\bar{\id\otimes\lambda}\circ\sigma\bigr)\otimes\id}\circ\sigma
\\&=\bar{\id\otimes\lambda\otimes\id}\circ\bar{\sigma\otimes\id}\circ\sigma
\\&=\bar{\id\otimes\lambda\otimes\id}\circ\bar{\id\otimes\delta_G}\circ\sigma
\\&=\ad\bigl(1\otimes\bar{M\otimes\id}(w_G\bigr)\circ\bar{\id\otimes\lambda\otimes\id}\circ(\sigma\otimes 1)
\\&=\ad\bar{\bar{1\otimes M}\otimes\id}(w_G)\circ\bigl(\bar{\id\otimes\lambda}\circ\sigma\otimes 1\bigr)
\\&=\ad\bar{j_G\otimes\id}(w_G)\circ(j_X\otimes 1),
\end{align*}
where the fourth equality follows 
by linearity, density, and strict continuity from the following computation with elementary tensors: for $\eta\in X$ and $t\in G$ we have
\begin{align*}
\bar{\id\otimes\lambda\otimes\id}\circ\bar{\id\otimes\delta_G}(\eta\otimes t)
&=\bar{\id\otimes\lambda\otimes\id}\bigl(\eta\otimes\delta_G(t)\bigr)
\\&=\eta\otimes\bar{\lambda\otimes\id}\circ\delta_G(t)
\\&=\eta\otimes\lambda_t\otimes t
\\&=\eta\otimes\ad\bar{M\otimes\id}(w_G)(\lambda_t\otimes 1)
\\&=\ad\bigl(1\otimes\bar{M\otimes\id}(w_G)\bigr)(\eta\otimes\lambda_t\otimes 1)
\end{align*}
where in turn the fourth equality follows from the following: for $f\in B(G)$ we have
\begin{align*}
S_f\bigl((\lambda_t\otimes t)\bar{M\otimes\id}(w_G)\bigr)
&=\lambda_tS_{f\cdot t}\bigl(\bar{M\otimes\id}(w_G)\bigr)
\\&=\lambda_tM_{f\cdot t}
\\&=M_f\lambda_t
\\&=S_f\bigl(\bar{M\otimes\id}(w_G)\bigr)\lambda_t
\\&=S_f\bigl(\bar{M\otimes\id}(w_G)(\lambda_t\otimes 1)\bigr),
\end{align*}
so that
\[
(\lambda_t\otimes t)\bar{M\otimes\id}(w_G)=\bar{M\otimes\id}(w_G)(\lambda_t\otimes 1).
\]

It now follows from \corref{covariant CP} that the pair
\[
\bigl(\OO_{j_X,j_A},\bar{k_{A\rtimes_\delta G}}\circ j_G\bigr)
\]
is a covariant homomorphism of the coaction $(\OO_X,\zeta)$ in $M(\OO_{X\rtimes_\sigma G})$,
and thus we get a homomorphism
\[
\Pi:=\OO_{j_X,j_A}\times \bigl(\bar{k_{A\rtimes_\delta G}}\circ j_G\bigr):
\OO_X\rtimes_\zeta G\to M(\OO_{X\rtimes_\sigma G}).
\]

It remains to show the following:
\begin{enumerate}
\item $\Pi$ maps into $\OO_{X\rtimes_\sigma G}$;

\item $\Pi$ is surjective;

\item $\Pi$ is injective.
\end{enumerate}

For (i), for $\xi\in X$, $a\in A$, and $f\in C_0(G)$ we have
\begin{align*}
&\OO_{j_X,j_A}\circ k_X(\xi)\bar{k_{A\rtimes_\delta G}}\circ j_G(f)
\\&\quad=\bar{k_{X\rtimes_\sigma G}}(j_X(\xi))\bar{k_{A\rtimes_\delta G}}(j_G(f))
\\&\quad=\bar{k_{X\rtimes_\sigma G}}\bigl(j_X(\xi)\cdot j_G(f))\bigr)
\end{align*}
and
\begin{align*}
&\OO_{j_X,j_A}\circ k_A(a)\bar{k_{A\rtimes_\delta G}}\circ j_G(f)
\\&\quad=\bar{k_{A\rtimes_\delta G}}(j_A(a))\bar{k_{A\rtimes_\delta G}}(j_G(f))
\\&\quad=\bar{k_{A\rtimes_\delta G}}\bigl(j_A(a)j_G(f))\bigr).
\end{align*}

For (ii), we see from the above that the image of $\Pi$ contains
\[
\bar{k_{X\rtimes_\sigma G}}\bigl(j_X(X)\cdot j_G(C_0(G))\bigr)
\midtext{and}
\bar{k_{A\rtimes_\delta G}}\bigl(j_A(A)\cdot j_G(C_0(G))\bigr),
\]
and hence contains
\[
\bar{k_{X\rtimes_\sigma G}}(X\rtimes_\sigma G)
\midtext{and}
\bar{k_{A\rtimes_\delta G}}(A\rtimes_\delta G),
\]
which generate $\OO_{X\rtimes_\sigma G}$.

For (iii) we apply \cite[Theorem~3.1]{QuiggLandstadDuality}:
we must show that $\Pi\circ j_{\OO_X}$ is faithful and that there is an action $\alpha$ of $G$ on $\OO_{X\rtimes_\sigma G}$ such that $\Pi$ is $\what\zeta-\alpha$ equivariant.

To see that $\Pi\circ j_{\OO_X}$ is faithful, we apply the Gauge-Invariant Uniqueness Theorem: since
\[
\Pi\circ j_{\OO_X}\circ k_A
=\OO_{j_X,j_X}\circ k_A
=j_A
\]
is faithful, it suffices to show that for all $z\in\T$, $\xi\in X$, and $a\in A$ we have
\begin{align*}
\gamma_z\circ\Pi\circ j_{\OO_X}\circ k_X(\xi)
&=z\Pi\circ j_{\OO_X}\circ k_X(\xi)
\\
\gamma_z\circ\Pi\circ j_{\OO_X}\circ k_A(a)
&=\Pi\circ j_{\OO_X}\circ k_A(a).
\end{align*}
For the first, we have
\begin{align*}
\gamma_z\circ\Pi\circ j_{\OO_X}\circ k_X(\xi)
&=\gamma_z\circ\OO_{j_X,j_A}\circ k_X(\xi)
\\&=\gamma_z\circ\bar{k_{X\rtimes_\sigma G}}\circ j_X(\xi)
\\&=z\bar{k_{X\rtimes_\sigma G}}\circ j_X(\xi)
\\&=z\Pi\circ j_{\OO_X}\circ k_X(\xi),
\end{align*}
where the third equality follows from
\[
\gamma_z\circ k_{X\rtimes_\sigma G}=zk_{X\rtimes_\sigma G}.
\]
The second is similar, this time using $\gamma_z\circ k_{A\rtimes_\delta G}=k_{A\rtimes_\delta G}$.

We now turn to the action of $G$.
First note that there is an action $\beta$ of $G$ on $X\rtimes_\sigma G$ given by
\[
\beta_t\bigl(j_X(\xi)\cdot j_G(f)\bigr)=j_X(\xi)\cdot j_G\circ \rt_t(f)
\midtext{for}\xi\in X,f\in C_0(G),
\]
where $\rt$ is the action of $G$ on $C_0(G)$ given by right translation.
This in turn gives an action $\alpha$ of $G$ on $\OO_{X\rtimes_\sigma G}$ such that
\begin{align*}
\alpha_t\circ k_{X\rtimes_\sigma G}&=k_{X\rtimes_\sigma G}\circ\beta_t\\
\alpha_t\circ k_{A\rtimes_\delta G}&=k_{A\rtimes_\delta G}\circ\beta_t.
\end{align*}
Finally, we check the $\what\zeta-\alpha$ covariance:
\begin{align*}
\alpha_t\circ\Pi\circ j_{\OO_X}
&=\alpha_t\circ\OO_{j_X,j_A}
\\&=\alpha_t\circ\bar{k_{X\rtimes_\sigma G}}\circ j_X
\\&=\bar{k_{X\rtimes_\sigma G}}\circ\beta_t\circ j_X
\\&=\bar{k_{X\rtimes_\sigma G}}\circ j_X
\\&=\Pi\circ j_{\OO_X}
\\*&=\Pi\circ\what\zeta_t\circ j_{\OO_X},
\end{align*}
and
\begin{align*}
\alpha_t\circ\Pi\circ j_G
&=\alpha_t\circ\bar{k_{A\rtimes_\delta G}}\circ j_G
\\&=\bar{k_{A\rtimes_\delta G}}\circ\beta_t\circ j_G
\\&=\bar{k_{A\rtimes_\delta G}}\circ j_G\circ\rt_t
\\&=\Pi\circ j_G\circ\rt_t
\\&=\Pi\circ\what\zeta_t\circ j_G.
\qedhere
\end{align*}
\end{proof}

\begin{cor}\label{implies CP}
Let $(\sigma,\delta)$ be a 
coaction of $G$ on a correspondence $(X,A)$.
If either
\begin{enumerate}
\item $G$ is amenable, or

\item $\varphi_A:A\to \LL(X)$ is faithful,
\end{enumerate}
then the canonical correspondence homomorphism
\[
(j_X,j_A):(X,A)\to (M(X\rtimes_\sigma G),M(A\rtimes_\delta G))
\] is Cuntz-Pimsner covariant.
\end{cor}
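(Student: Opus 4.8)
The plan is to use \lemref{j CP} and reduce the whole statement to the single containment $j_A(J_X)\subseteq M(A\rtimes_\delta G;J_{X\rtimes_\sigma G})$. The first thing I would establish is that $J_X$ is a $\delta$-invariant ideal of $A$. Since $\varphi_A:A\to\LL(X)$ is equivariant for $\delta$ and the coaction $\sigma^{(1)}$ on $\KK(X)$ (this is the coaction appearing in \cite[Lemma~3.10]{enchilada}), and $\sigma^{(1)}$ is injective, both $\ker\varphi_A$ and $\varphi_A^{-1}(\KK(X))$ are $\delta$-invariant ideals; a short slice-map computation shows the annihilator $(\ker\varphi_A)^\perp$ is $\delta$-invariant as well, so $J_X=\varphi_A^{-1}(\KK(X))\cap(\ker\varphi_A)^\perp$ is $\delta$-invariant. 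Hence $J_X\rtimes_\delta G$ is a well-defined ideal of $A\rtimes_\delta G$, and since $j_A(a)\,j_A(b)j_G(f)=j_A(ab)j_G(f)$ with $ab\in J_X$, together with the adjoint computation on the other side, a routine argument gives $j_A(J_X)\subseteq M(A\rtimes_\delta G;J_X\rtimes_\delta G)$. As $M(A\rtimes_\delta G;-)$ is order preserving, it now suffices to prove $J_X\rtimes_\delta G\subseteq J_{X\rtimes_\sigma G}$.

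For this I would verify the two defining properties of $J_{X\rtimes_\sigma G}$ for the ideal $J_X\rtimes_\delta G$ of $A\rtimes_\delta G$, using throughout the isomorphism $\KK(X\rtimes_\sigma G)\cong\KK(X)\rtimes_{\sigma^{(1)}}G$ of \cite[Lemma~3.10]{enchilada} under which $\varphi_{A\rtimes_\delta G}$ becomes $\varphi_A\rtimes G$. The property that $\varphi_{A\rtimes_\delta G}$ maps $J_X\rtimes_\delta G$ into $\KK(X\rtimes_\sigma G)$ is automatic: because $\varphi_A(J_X)\subseteq\KK(X)$, functoriality gives $\varphi_{A\rtimes_\delta G}(j_A(a)j_G(f))=j_{\KK(X)}(\varphi_A(a))\,j_G(f)\in\KK(X)\rtimes_{\sigma^{(1)}}G$ for $a\in J_X$ and $f\in C_0(G)$, and since $\KK(X\rtimes_\sigma G)$ is an ideal this forces $J_X\rtimes_\delta G\subseteq\varphi_{A\rtimes_\delta G}^{-1}(\KK(X\rtimes_\sigma G))$. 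So everything reduces to the second property, namely $J_X\rtimes_\delta G\subseteq(\ker\varphi_{A\rtimes_\delta G})^\perp$, which for ideals is the same as $(J_X\rtimes_\delta G)\cap\ker\varphi_{A\rtimes_\delta G}=\{0\}$; that is, $\varphi_{A\rtimes_\delta G}$ must be faithful on $J_X\rtimes_\delta G$.

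This is precisely where the two hypotheses are used, each delivering the required faithfulness by a different route. Under hypothesis (ii), $\varphi_A$ is faithful, and I would argue that $\varphi_{A\rtimes_\delta G}=\varphi_A\rtimes G$ is then faithful outright: realizing $A\rtimes_\delta G$ and $\KK(X)\rtimes_{\sigma^{(1)}}G$ by their regular representations inside $A\otimes\KK(L^2(G))$ and $\KK(X)\otimes\KK(L^2(G))$, the homomorphism $\varphi_A\otimes\id_{\KK(L^2(G))}$ is injective and restricts to $\varphi_A\rtimes G$, so $\ker\varphi_{A\rtimes_\delta G}=\{0\}$. Under hypothesis (i), $G$ is amenable, so $C^*(G)$ is nuclear, in particular exact; then $\varphi_A(J_X)$ is a $\sigma^{(1)}$-invariant $C^*$-subalgebra of $\KK(X)$ carried isomorphically onto $J_X$ by $\varphi_A|_{J_X}$, and exactness of $C^*(G)$ (compare \lemref{contained}) makes the crossed-product construction for coactions preserve the inclusion $\varphi_A(J_X)\hookrightarrow\KK(X)$; under the identifications above this inclusion of crossed products is exactly $\varphi_{A\rtimes_\delta G}|_{J_X\rtimes_\delta G}$, which is therefore injective. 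In either case $(J_X\rtimes_\delta G)\cap\ker\varphi_{A\rtimes_\delta G}=\{0\}$ and we are done.

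The hard part is that last step: controlling $\ker\varphi_{A\rtimes_\delta G}$ sufficiently to see that $J_X\rtimes_\delta G$ meets it trivially. For a general locally compact group, and a left action $\varphi_A$ that is neither faithful nor nondegenerate after restriction to $J_X$, the homomorphism one wants to prove faithful lands in a multiplier algebra, and the nonexactness of minimal $C^*$-tensor products obstructs the clean regular-representation argument that works under (ii); this is exactly why one imposes amenability or faithfulness here, and why it stays open whether $(j_X,j_A)$ is Cuntz-Pimsner covariant without extra hypotheses.
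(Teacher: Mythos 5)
Your overall architecture matches the paper's: reduce via \lemref{j CP} to showing that the ideal of $A\rtimes_\delta G$ generated by $j_A(J_X)$ lands inside $J_{X\rtimes_\sigma G}$, check separately that $\varphi_{A\rtimes_\delta G}$ (identified with $\varphi_A\rtimes G$) maps this ideal into $\KK(X)\rtimes_{\sigma^{(1)}}G$ and that it is injective there, and let the two hypotheses each supply the injectivity. The ``into the compacts'' step and the case (ii) injectivity argument (via the regular representation and injectivity of $\varphi_A$) are essentially the paper's. However, there is a genuine gap in your opening move: you assert, \emph{before} any case split and for an arbitrary locally compact $G$, that $(\ker\varphi_A)^\perp$ is $\delta$-invariant ``by a short slice-map computation,'' and hence that $J_X$ is $\delta$-invariant and $J_X\rtimes_\delta G$ is a well-defined ideal. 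That is precisely the delicate point, and it is exactly where amenability enters in the paper. To show $(f\cdot a)b=0$ for $a\in J_X$, $b\in\ker\varphi_A$, $f\in A(G)$, one must (a) know that $\delta$ restricts to a \emph{coaction-nondegenerate} coaction on $\ker\varphi_A$, so that $\ker\varphi_A\otimes\cg=\clspn\{\delta(\ker\varphi_A)(1\otimes\cg)\}$, and (b) factor $f=c\cdot f'$ with $c\in\cg$ so as to convert $\delta(a)(b\otimes 1)$ into expressions $\delta(ab_i)(1\otimes c_i)$. Both steps are obtained in the paper only from amenability of $G$; neither is a routine slice-map identity, and the authors explicitly do not know whether the conclusion holds without such hypotheses.

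This misplacement has a concrete consequence for your case (ii), where $G$ is arbitrary: your reduction requires $J_X\rtimes_\delta G$ to exist as an ideal of $A\rtimes_\delta G$ and to coincide with the ideal generated by $j_A(J_X)j_G(C_0(G))$, which for full coactions of non-amenable groups is not justified (this is where invariance and exactness issues live). The fix is the paper's: in case (ii) do not pass through $J_X\rtimes_\delta G$ at all, but work directly with the ideal $I=\clspn\{(A\rtimes_\delta G)j_A(J_X)(A\rtimes_\delta G)\}$ and observe that $\varphi_A\rtimes G$ is injective on \emph{all} of $A\rtimes_\delta G$ (since $\varphi_A$ gives an equivariant isomorphism onto its image and the image's crossed product includes in $M(\KK(X)\rtimes_{\sigma^{(1)}}G)$), so a fortiori on $I$; reserve the invariance of $J_X$, with its factorization argument, for the amenable case. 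A minor further point: in case (i) your appeal to exactness of $\cg$ to see that $\varphi_A|\rtimes G$ is injective on $J_X\rtimes_\delta G$ is not needed --- once $J_X$ is known to be invariant, the injective equivariant map $\varphi_A|:J_X\to\KK(X)$ induces an injection of crossed products directly from the regular-representation picture.
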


\begin{proof}
By \lemref{j CP}, it suffices to  show that
\[
j_A(J_X)(A\rtimes_\delta G)\subset J_{X\rtimes_\sigma G}.
\]
The ideal of $A\rtimes_\delta G$ generated by $j_A(J_X)(A\rtimes_\delta G)$ is
\[
I:=\clspn\{(A\rtimes_\delta G)j_A(J_X)(A\rtimes_\delta G)\},
\]
so it suffices to show that $\varphi_{A\rtimes_\delta G}$ maps $I$ injectively into $\KK(X\rtimes_\sigma G)$.
As we observed immediately before \remref{j nondegenerate}, we can
work with $\varphi_A\rtimes G$ and $\KK(X)\rtimes_{\sigma^{(1)}} G$ rather than $\varphi_{A\rtimes_\delta G}$ and $\KK(X\rtimes_\sigma G)$.
To see that $\varphi_A\rtimes G$ maps $I$ into $\KK(X)\rtimes_{\sigma^{(1)}} G$, it suffices to observe that
\begin{align*}
&(\varphi_A\rtimes G)\bigl(j_G^A(C_0(G))j_A(A)j_A(J_X)j_A(A)j_G^A(C_0(G))\bigr)
\\&\quad=(\varphi_A\rtimes G)\bigl(j_G^A(C_0(G))j_A(AJ_XA)j_G^A(C_0(G))\bigr)
\\&\quad\subset(\varphi_A\rtimes G)\bigl(j_G^A(C_0(G))j_A(J_X)j_G^A(C_0(G))\bigr)
\\&\quad=j_G^{\KK(X)}(C_0(G))\bar{j_{\KK(X)}}(\varphi_A(J_X))j_G^{\KK(X)}(C_0(G))
\\&\quad\subset j_G^{\KK(X)}(C_0(G))j_{\KK(X)}(\KK(X))j_G^{\KK(X)}(C_0(G))
\\&\quad\subset \KK(X)\rtimes_{\sigma^{(1)}} G.
\end{align*}

On the other hand, to see that $\varphi_A\rtimes G$ is injective on $I$,
we now 
consider each hypothesis (i) and (ii) separately.
First, if $\varphi_A$ is injective, then so is $\varphi_A\rtimes G$,
because $\varphi_A$ gives a $G$-equivariant isomorphism between $(A,\delta)$ and the image $(\varphi_A(A),\eta)$, where $\eta$ is the corresponding coaction on $\varphi_A(A)$, and we have a commuting diagram
\[
\xymatrix{
A\rtimes_\delta G \ar[r]^-\cong \ar[dr]_(.4){\varphi_A\rtimes G}
&\varphi_A(A)\rtimes_\eta G \ar@{^(->}[d]
\\
&M(\KK(X)\rtimes_{\sigma^{(1)}} G),
}
\]
where the horizontal arrow is an isomorphism and the vertical arrow is an inclusion.

Thus it remains to show that  $\varphi_A\rtimes G$ is injective on $I$ under the
assumption that $G$ is amenable.
We will show that in this case $J_X$ is a $\delta$-invariant ideal of $A$ in the sense that $\delta$ restricts to a coaction on $J_X$.
It will follow that
\[
I=J_X\rtimes_\delta G,
\]
and since the restriction $\varphi_A|:J_X\to \KK(X)$ is injective 
we will be able to conclude that
\[
\varphi_A|\rtimes G:J_X\rtimes_\delta G\to \KK(X)\rtimes_{\sigma^{(1)}} G
\]
is injective as well.

To see that $J_X$ is invariant, by \cite[Proposition~2.6]{QuiggLandstadDuality} it suffices to show that $J_X$ is an $A(G)$-submodule of $A$.
Let $f\in A(G)$ and $a\in J_X$. We must show both of the following:
\begin{enumerate}
\item $\varphi_A(f\cdot a)\in \KK(X)$;

\item $(f\cdot a)b=0$ for all $b\in \ker\varphi_A$.
\end{enumerate}
For (i), we have
\begin{align*}
\varphi_A(f\cdot a)
&=\varphi_A\circ S_f\circ\delta(a)
\\&=S_f\circ\bar{\varphi_A\otimes\id}\circ\delta(a)
\\&=S_f\circ\bar{\sigma^{(1)}}\circ\varphi_A(a)
\\&\subset S_f\circ\sigma^{(1)}(\KK(X))
\\&\subset S_f\Bigl(M_{\cg}\bigl(\KK(X)\otimes \cg\bigr)\Bigr)
\\&\subset \KK(X),
\end{align*}
by \cite[Lemma~1.5]{lprs}.

In preparation for (ii), we first show that $\ker\varphi_A$ is $\delta$-invariant: if $f\in A(G)$ and $b\in \ker\varphi_A$, then
\begin{align*}
\varphi_A(f\cdot b)
&=S_f\circ\bar{\varphi_A\otimes\id}\circ\delta(b)
=S_f\circ\bar{\sigma^{(1)}}\circ\varphi_A(b)
=0.
\end{align*}
Thus $\delta$ restricts to a coaction on $\ker\varphi_A$, so
\begin{equation}\label{kernel nondegenerate}
\clspn\{\delta(\ker\varphi_A)(1\otimes \cg)\}=\ker\varphi_A\otimes \cg.
\end{equation}
We now verify (ii): for $f\in A(G)$, $a\in J_X$, and $b\in \ker\varphi_A$ we first factor $f=c\cdot f'$ for some $c\in \cg$ and $f'\in A(G)$ (using amenability of $G$ again), and then
\begin{align*}
(f\cdot a)b
&=S_f\circ\delta(a)b
\\&=S_f\bigl(\delta(a)(b\otimes 1)\bigr)
\\&=S_{c\cdot f'}\bigl(\delta(a)(b\otimes 1)\bigr)
\\&=S_{f'}\bigl(\delta(a)(b\otimes c)\bigr)
\\&\approx \sum_1^nS_{f'}\bigl(\delta(a)\delta(b_i)(1\otimes c_i)\bigr)
\\&\hspace{.5in}\text{(for some $b_i\in \ker\varphi_A$ and $c_i\in C$, by \eqref{kernel nondegenerate})}
\\&\approx \sum_1^nS_{f'}\bigl(\delta(ab_i)(1\otimes c_i)\bigr)
\\&=0,
\end{align*}
because $J_X\subset (\ker\varphi_A)^\perp$.
\end{proof}

Then \cite[Theorem~6.9]{QuiggDualityTwisted} (see also \cite[Theorem~2.9]{lprs})
shows that the crossed product of a $C^*$-algebra $A$ by an inner coaction of $G$ is isomorphic to $A\otimes C_0(G)$;
the following result is a version for  correspondences:

\begin{prop}\label{inner crossed}
Let $(A,X,B)$ be a correspondence, and let $(A,\delta)$ and $(B,\epsilon)$ be inner coactions implemented by nondegenerate homomorphisms $\mu$ and $\nu$, respectively, and let $\sigma$ be the associated coaction on $X$, as in \propref{inner}. Then
there is an isomorphism
\[
\Phi:X\rtimes_\sigma G\to X\otimes C_0(G)
\]
given by
\[
\Phi(y)=\bar{\mu\otimes\lambda}(w_G)^*\cdot y \cdot \bar{\nu\otimes\lambda}(w_G).
\]
The left and right module actions are transformed  by $\Phi$ as follows:
\begin{align*}
&\Phi\bigl(j_A(a)j_G^A(f)\cdot y\cdot j_B(b)j_G^B(g)\bigr)
\\&\quad=(a\otimes 1)(\mu\times M)(f)\cdot \Phi(y)\cdot (b\otimes 1)(\nu\times M)(g),
\end{align*}
where $\mu\times M$ denotes the Kronecker product of $\mu$ and $M$, respectively, and similarly for $\nu\times M$.
\end{prop}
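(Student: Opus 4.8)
The plan is to realize the isomorphism directly by conjugation, exactly as in the $C^*$-algebra argument behind \cite[Theorem~6.9]{QuiggDualityTwisted}. Write $u=\bar{\mu\otimes\id}(w_G)$ and $v=\bar{\nu\otimes\id}(w_G)$ as in \propref{inner}, and put $U=\bar{\id\otimes\lambda}(u)$, $V=\bar{\id\otimes\lambda}(v)$; these are unitaries, $U$ acting on $X\otimes L^2(G)$ through the left $A$-action and $V$ through the right $B$-action. Since $j_X=(\id\otimes\lambda)\circ\sigma$ and $j_G^B=1\otimes M$, we get $j_X(\xi)=U\cdot(\xi\otimes 1)\cdot V^*$, so $X\rtimes_\sigma G=\clspn\{U\cdot(\xi\otimes 1)\cdot V^*\cdot(1\otimes M(g)):\xi\in X,\ g\in C_0(G)\}$. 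Define $\Phi(y)=U^*\cdot y\cdot V$ for $y\in M(X\otimes\KK(L^2(G)))$; since $U$ and $V$ are unitary, $\Phi$ is isometric, hence injective and well defined on closed spans, and it carries inner products by $\langle\Phi(y_1),\Phi(y_2)\rangle=V^*\langle y_1,y_2\rangle V$.

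The one genuinely computational ingredient is the identity
\[
V^*(1\otimes M(g))V=(\nu\times M)(g)\qquad(g\in C_0(G)),
\]
and likewise $U^*(1\otimes M(f))U=(\mu\times M)(f)$; this is essentially \cite[Theorem~6.9]{QuiggDualityTwisted} (see also \cite[Theorem~2.9]{lprs}), and can also be checked by a direct computation with the canonical unitary $w_G$. The same references show that $\ad V^*$ restricts to a $C^*$-isomorphism of $B\rtimes_\epsilon G$ onto $B\otimes C_0(G)$ (identified inside $M(B\otimes\KK(L^2(G)))$ via $\id\otimes M$), carrying $j_B(b)j_G^B(g)$ to $(b\otimes 1)(\nu\times M)(g)$, and similarly $\ad U^*$ for $A$ and $\mu$.

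Granting this, $\Phi(j_X(\xi)j_G^B(g))=U^*U(\xi\otimes 1)V^*(1\otimes M(g))V=(\xi\otimes 1)(\nu\times M)(g)$, so the range of $\Phi$ is $\clspn\{(\xi\otimes 1)(\nu\times M)(g):\xi\in X,\ g\in C_0(G)\}$. Using nondegeneracy $X=X\cdot B$ to write $\xi=\xi' b$, this equals $\clspn\{(\xi'\otimes 1)\cdot(b\otimes 1)(\nu\times M)(g)\}=\clspn\{(\xi'\otimes 1)\cdot(B\otimes C_0(G))\}$ by the previous paragraph, and this is $X\otimes C_0(G)$ by nondegeneracy again. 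Hence $\Phi$ is a bijection of $X\rtimes_\sigma G$ onto $X\otimes C_0(G)$; together with the inner-product identity, whose right side $V^*\langle y_1,y_2\rangle V$ is precisely the $C^*$-isomorphism $B\rtimes_\epsilon G\cong B\otimes C_0(G)$ applied to $\langle y_1,y_2\rangle$, this makes $\Phi$ an isomorphism of correspondences compatible with the coefficient isomorphisms $A\rtimes_\delta G\cong A\otimes C_0(G)$ and $B\rtimes_\epsilon G\cong B\otimes C_0(G)$.

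For the module-action formula, recall from \cite[Proposition~3.9]{enchilada} that the left action of $A\rtimes_\delta G$ on $X\rtimes_\sigma G\subset M(X\otimes\KK(L^2(G)))$ is implemented by $\varphi_A\otimes\id$, so $j_A(a)j_G^A(f)$ acts by left multiplication by $U(a\otimes 1)U^*(1\otimes M(f))$ and $j_B(b)j_G^B(g)$ acts by right multiplication by $V(b\otimes 1)V^*(1\otimes M(g))$; conjugating by $\Phi$ and inserting the identities $U^*(1\otimes M(f))U=(\mu\times M)(f)$ and $V^*(1\otimes M(g))V=(\nu\times M)(g)$ yields exactly
\[
\Phi\bigl(j_A(a)j_G^A(f)\cdot y\cdot j_B(b)j_G^B(g)\bigr)=(a\otimes 1)(\mu\times M)(f)\cdot\Phi(y)\cdot(b\otimes 1)(\nu\times M)(g).
\]
I expect the main obstacle to be bookkeeping rather than anything conceptual: keeping track of the several multiplier-algebra identifications, verifying that the left module action of $X\rtimes_\sigma G$ is implemented inside $M(X\otimes\KK(L^2(G)))$ exactly by $\varphi_A\otimes\id$, and confirming that the coefficient-algebra isomorphisms are the ones realized by $\ad U^*$ and $\ad V^*$, so that $\Phi$ assembles with them into a single correspondence isomorphism. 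The conjugation identity for $V^*(1\otimes M(g))V$ is the only real computation, and it is standard.
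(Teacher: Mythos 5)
Your proposal is correct and follows essentially the same route as the paper: conjugation by the unitaries $\bar{\mu\otimes\lambda}(w_G)$ and $\bar{\nu\otimes\lambda}(w_G)$, the key identity $\ad\bar{\nu\otimes\lambda}(w_G)^*(1\otimes M_g)=(\nu\times M)(g)$, and the range computation via $X=X\cdot B$ together with the $C^*$-algebra case from \cite[Theorem~6.9]{QuiggDualityTwisted} or \cite[Theorem~2.9]{lprs}. The paper merely packages the same content as the intertwining relations $\Phi\circ j_X=\id_X\otimes 1$, $\Phi\circ j_G^A=\mu\times M$, $\Phi\circ j_G^B=\nu\times M$, etc.
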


\begin{proof}
Note that we are identifying $C_0(G)$ with its image under the representation $M$ on $L^2(G)$ by pointwise multiplication, i.e., $(M_f\xi)(t)=f(t)\xi(t)$ for $f\in C_0(G)$ and $\xi\in L^2(G)$.
Routine calculations show
\begin{align*}
\Phi\circ j_A&=\id_A\otimes 1\\
\Phi\circ j_B&=\id_B\otimes 1\\
\Phi\circ j_X&=\id_X\otimes 1\\
\Phi\circ j_G^A&=\mu\times M\\
\Phi\circ j_G^B&=\nu\times M;
\end{align*}
for the last two it helps to note that
\[
\ad\bar{\id\otimes\lambda}(w_G)^*(1\otimes M_f)=(\id\times M)(f).
\]
Since
\begin{align*}
\bar{\mu\otimes\lambda}(w_G)&\in M(A\otimes \KK(L^2(G))),
\end{align*}
and similarly for $\bar{\nu\otimes\lambda}(w_G)$, clearly $\Phi$ maps $X\rtimes_\sigma G$ into $M(X\otimes L^2(G))$.
We actually have $\Phi(X\rtimes_\sigma G)=X\otimes C_0(G)$, because
\begin{align*}
&\clspn\bigl\{\Phi\bigl(j_X(X)\cdot j_G(C_0(G))\bigr\}
\\&\quad=\clspn\bigl\{\Phi\bigl(j_X(X\cdot B)\cdot j_G(C_0(G))\bigr\}
\\&\quad=\clspn\bigl\{\Phi\bigl(j_X(X)\cdot j_B(B)j_G(C_0(G))\bigr\}
\\&\quad=\clspn\bigl\{(X\otimes 1)\cdot \ad\bar{\nu\otimes\id}(w_G)^*(B\rtimes_\epsilon G)\bigr\}
\\&\quad=\clspn\bigl\{(X\otimes 1)\cdot (B\otimes C_0(G))\bigr\}
\\&\hspace{1in}\text{(by \cite[Theorem~6.9]{QuiggDualityTwisted} or \cite[Theorem~2.9]{lprs})}
\\&\quad=X\otimes C_0(G).
\qedhere
\end{align*}
\end{proof}

Let $(\gamma,\alpha)$ be an action of $G$ on a correspondence $(X,A)$.
Assume that $G$ is amenable; in particular, there is no difference between the full and reduced crossed products $X\rtimes_\gamma G$ and $X\rtimes_{\gamma,r} G$ (and similarly for $A$), so we can freely apply the results of \cite[Section 3.1]{enchilada}.

As in \cite[Proposition~3.5]{enchilada}, let $\what\gamma$ be the dual coaction of $G$ on $X\rtimes_\gamma G$, determined on generators $\xi\in C_c(G,X)$ by
\[
\what\gamma(\xi)(t)=\xi(t)\otimes t,
\]
so that $\what\gamma$ is an element of $C_c(G,M^\beta(X\otimes \cg))$,
which in turn is embedded in $M((X\rtimes_\gamma G)\otimes \cg)$ via the isomorphism \cite[Lemma~3.4]{enchilada}
\[
(X\rtimes_\gamma G)\otimes \cg\xrightarrow{\cong}
(X\otimes \cg)\rtimes_{\gamma\otimes\id} G
\]
that extends the canonical embedding
\[
C_c(G,X)\odot \cg\hookrightarrow C_c(G,X\otimes \cg).
\]

\begin{prop}\label{dual coaction}
Let $(\gamma,\alpha)$ be an action of $G$ on a correspondence $(X,A)$, and assume that $G$ is amenable.
Then the dual coaction $(\what\gamma,\what\alpha)$ on $(X\rtimes_\gamma G,A\rtimes_\alpha G)$ 
satisfies
\begin{equation}\label{dual}
\what\alpha(J_{X\rtimes_\gamma G})\subset
M\bigl((A\rtimes_\alpha G)\otimes C^*(G);J_{X\rtimes_\gamma G}\otimes C^*(G)\bigr).
\end{equation}
\end{prop}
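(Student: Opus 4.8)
The plan is to identify Katsura's ideal $J_{X\rtimes_\gamma G}$ explicitly as $J_X\rtimes_\alpha G$, and then to deduce \eqref{dual} from the general fact that a coaction carries an invariant ideal into the corresponding relative multipliers. Precisely, I would first record the following: if $\beta$ is a coaction of $G$ on a $C^*$-algebra $B$ and $I$ is a $\beta$-invariant ideal of $B$, so that $\beta$ restricts to a coaction $\beta|_I$ on $I$, then $\beta(I)\subset M(B\otimes\cg;I\otimes\cg)$. Indeed, for $b\in I$ the coaction-nondegeneracy of $\beta$ and then of $\beta|_I$ give
\[
\beta(b)(B\otimes\cg)=\clspn\{\beta(b)\beta(B)(1\otimes\cg)\}=\clspn\{\beta(bB)(1\otimes\cg)\}\subset\clspn\{\beta(I)(1\otimes\cg)\}=I\otimes\cg,
\]
and applying this to $b^*$ and taking adjoints (using that $B\otimes\cg$ and $I\otimes\cg$ are $*$-closed) handles the products $(B\otimes\cg)\beta(b)$ as well.

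Next I would check that $J_X$ is an $\alpha$-invariant ideal of $A$: the induced action $\gamma^{(1)}$ on $\KK(X)$ makes $\varphi_A$ equivariant, so $\ker\varphi_A$ and $\varphi_A^{-1}(\KK(X))$ are $\alpha$-invariant ideals, and hence so is $J_X=\varphi_A^{-1}(\KK(X))\cap(\ker\varphi_A)^\perp$. It then follows from the standard theory of dual coactions that $J_X\rtimes_\alpha G$ is a $\what\alpha$-invariant ideal of $A\rtimes_\alpha G$, with $\what\alpha|_{J_X\rtimes_\alpha G}$ the dual coaction of $\alpha|_{J_X}$.

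The main work is the identification $J_{X\rtimes_\gamma G}=J_X\rtimes_\alpha G$. Here I would invoke the action analogue of \cite[Lemma~3.10]{enchilada} (see \cite[Section~3.1]{enchilada} and \cite{HN}), which identifies $\KK(X\rtimes_\gamma G)$ with $\KK(X)\rtimes_{\gamma^{(1)}}G$ in a way carrying $\varphi_{A\rtimes_\alpha G}$ to $\varphi_A\rtimes G$. Since $G$ is amenable, full and reduced crossed products coincide and the crossed-product functor is exact, so $\ker(\varphi_A\rtimes G)=(\ker\varphi_A)\rtimes_\alpha G$ and $(\varphi_A\rtimes G)^{-1}\bigl(\KK(X)\rtimes_{\gamma^{(1)}}G\bigr)=\varphi_A^{-1}(\KK(X))\rtimes_\alpha G$, while $(\cdot)^\perp$ and finite intersections of $\alpha$-invariant ideals commute with $-\rtimes_\alpha G$. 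As $J_{X\rtimes_\gamma G}$ is the largest ideal of $A\rtimes_\alpha G$ that $\varphi_A\rtimes G$ maps injectively into $\KK(X)\rtimes_{\gamma^{(1)}}G$, combining these facts gives
\[
J_{X\rtimes_\gamma G}=\bigl(\varphi_A^{-1}(\KK(X))\rtimes_\alpha G\bigr)\cap\bigl((\ker\varphi_A)\rtimes_\alpha G\bigr)^\perp=\Bigl(\varphi_A^{-1}(\KK(X))\cap(\ker\varphi_A)^\perp\Bigr)\rtimes_\alpha G=J_X\rtimes_\alpha G,
\]
where the inclusion $\supset$ also uses that $\varphi_A|_{J_X}\colon J_X\to\KK(X)$ is injective and equivariant, so that $\varphi_A|_{J_X}\rtimes G$ is injective. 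Finally, the general fact of the first paragraph, applied with $B=A\rtimes_\alpha G$, $I=J_X\rtimes_\alpha G=J_{X\rtimes_\gamma G}$, and $\beta=\what\alpha$, gives exactly \eqref{dual}.

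The main obstacle is this ideal-theoretic identification, and in particular the interchange of $\varphi_A^{-1}(\cdot)$, $(\cdot)^\perp$, intersections, and kernels with the crossed-product construction; this is the one place amenability is used (through exactness of $-\rtimes_\alpha G$ and the coincidence of full and reduced crossed products), and it seems likely that working with reduced crossed products throughout would remove the amenability hypothesis.
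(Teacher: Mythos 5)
Your proposal is correct and reaches the same conclusion, but the route differs from the paper's in an interesting way. The paper simply cites \cite[Proposition~2.7]{HN} for both the $\alpha$-invariance of $J_X$ and the identification $J_{X\rtimes_\gamma G}=J_X\rtimes_\alpha G$, and then verifies \eqref{dual} by a bare-hands convolution computation: for $f\in C_c(G,J_X)$ and $g\in C_c(G,A\otimes\cg)$ it checks pointwise that $\bigl(\what\alpha(f)g\bigr)(t)=\int_G(f(s)\otimes s)\bar{\alpha_s\otimes\id}(g(s\inv t))\,ds$ lands in $J_X\otimes\cg$, and transports the result through the isomorphism $(A\rtimes_\alpha G)\otimes\cg\cong(A\otimes\cg)\rtimes_{\alpha\otimes\id}G$. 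You instead isolate a clean abstract lemma --- if a coaction $\beta$ restricts to a coaction on an ideal $I$, then $\beta(I)\subset M(B\otimes\cg;I\otimes\cg)$ --- whose two-line proof via coaction-nondegeneracy of $\beta$ and of $\beta|_I$ is correct, and which makes the conclusion automatic once one knows $J_{X\rtimes_\gamma G}=J_X\rtimes_\alpha G$ is $\what\alpha$-invariant. This is arguably more conceptual (and the lemma would have been reusable in the paper's Corollary~\ref{implies CP}, where a closely related argument is carried out by hand), whereas the paper's computation is self-contained at the level of $C_c$-functions. The one place where your write-up is weaker than what it replaces is the re-derivation of $J_{X\rtimes_\gamma G}=J_X\rtimes_\alpha G$: the interchange of $\varphi_A^{-1}(\cdot)$, $(\cdot)^\perp$, and intersections with $-\rtimes_\alpha G$ is plausible but each reverse inclusion needs an argument (e.g.\ that $(I\rtimes_\alpha G)^\perp\subset I^\perp\rtimes_\alpha G$, and that $(\varphi_A\rtimes G)^{-1}(\KK(X)\rtimes G)\subset\varphi_A^{-1}(\KK(X))\rtimes G$); since this identity is exactly \cite[Proposition~2.7]{HN}, which the paper cites, you would do better to cite it as well rather than sketch a proof.
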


\begin{proof}
By \cite[Proposition~2.7]{HN}, the ideal $J_X$ of $A$ is $\alpha$-invariant, and
\[
J_{X\rtimes_\gamma G}=J_X\rtimes_\alpha G.
\]
The isomorphism
\[
(A\rtimes_\alpha G)\otimes \cg\xrightarrow{\cong} (A\otimes \cg)\rtimes_{\alpha\otimes\id} G,
\]
of 
\cite[Lemma~A.20]{enchilada} clearly takes $(J_X\rtimes_\alpha G)\otimes \cg$ to $(J_X\otimes \cg)\rtimes_{\alpha\otimes\id} G$.

Recall that $\what\alpha$ takes a function $f\in C_c(G,A)$ to the function in $C_c(G,M^\beta(A\otimes \cg))$ defined by
\[
\what\alpha(f)(t)=f(t)\otimes t.
\]
It follows that for $g\in C_c(G,A\otimes \cg)$ we have
\begin{align*}
\bigl(\what\alpha(f)g\bigr)(t)
&=\int_G\what\alpha(f)(s)\bar{\alpha_s\otimes\id}(g(s\inv t))\,ds
\\&=\int_G\bigl(f(s)\otimes s\bigr)\bar{\alpha_s\otimes\id}(g(s\inv t))\,ds.
\end{align*}
Now let $f\in C_c(G,J_X)$.
For all $s\in G$, it is easy to check, by first computing with elementary tensors $a\otimes c\in A\odot \cg$, that
\[
\bigl(f(s)\otimes s\bigr)(A\otimes \cg)\subset J_X\otimes \cg,
\]
and it follows that
\[
\what\alpha(f)g\in C_c(G,J_X\otimes \cg)
\subset (J_X\otimes \cg)\rtimes_{\alpha\otimes\id} G.
\]
By density, this implies that
\[
\what\alpha\bigl(J_X\rtimes_\alpha G\bigr)\subset
M\bigl((A\otimes \cg)\rtimes_{\alpha\otimes\id} G;(J_X\otimes \cg)\rtimes_{\alpha\otimes\id} G\bigr),
\]
which in turn implies \eqref{dual}.
\end{proof}

\section{Application}\label{apps}

As an application of our techniques, we will give an alternative approach to 
a recent result of Hao and Ng
\cite[Theorem~2.10]{HN}.
Given an action $(\gamma,\alpha)$ of an amenable locally compact group $G$ on a nondegenerate correspondence $(X,A)$, 
Hao and Ng construct an isomorphism
\[
\OO_{X\rtimes_\gamma G}\iso \OO_X\rtimes_\beta G,
\]
where $X\rtimes_\gamma G$ is the crossed-product correspondence over $A\rtimes_\alpha G$
and $\beta$ is the associated action of $G$ on $\OO_X$.
In our earlier paper \cite[Proposition~4.3]{KQRCorrespondenceFunctor} we suggested an alternative approach to this result, removing the amenability hypothesis on $G$.
Namely, we construct a surjection that goes in the opposite direction:
\[
\OO_X\rtimes_\beta G\to \OO_{X\rtimes_\gamma G}.
\]
We suspect, but were unable to prove, that this is an isomorphism in general;
however, at least in the amenable case, we can give a new proof of \cite[Theorem~2.10]{HN} with the help of Propositions~\ref{dual coaction} and \ref{coaction}.

\begin{thm}\label{injective}
Let $(\gamma,\alpha)$ be an action of $G$ on a nondegenerate correspondence $(X,A)$,
let $\beta$ be the associated action of $G$ on $\OO_X$,
and let
\[
\Pi:=\OO_{i_X,i_A}\times u:\OO_X\rtimes_\beta G\to \OO_{X\rtimes_\gamma G}
\]
be the surjection from \cite[Proposition~4.3]{KQRCorrespondenceFunctor}.
If $G$ is amenable, then $\Pi$ is an isomorphism.
\end{thm}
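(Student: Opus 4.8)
The plan is to show that $\Pi$ is injective by exhibiting an inverse-producing homomorphism, and the natural candidate is the isomorphism from \thmref{crossed} applied to the dual coaction $\what\gamma$ on the crossed-product correspondence $X\rtimes_\gamma G$. First I would invoke \propref{dual coaction}: since $G$ is amenable, the dual coaction $(\what\gamma,\what\alpha)$ on $(X\rtimes_\gamma G, A\rtimes_\alpha G)$ satisfies the hypothesis $\what\alpha(J_{X\rtimes_\gamma G})\subset M\bigl((A\rtimes_\alpha G)\otimes\cg;J_{X\rtimes_\gamma G}\otimes\cg\bigr)$, so by \propref{coaction} there is an associated coaction $\what\zeta$ on $\OO_{X\rtimes_\gamma G}$. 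Moreover, since $G$ is amenable, \corref{implies CP}(i) applies to $(\what\gamma,\what\alpha)$, so the canonical correspondence homomorphism $(j_{X\rtimes_\gamma G}, j_{A\rtimes_\alpha G})$ into the multipliers of the iterated crossed product is Cuntz-Pimsner covariant. Hence \thmref{crossed} gives an isomorphism
\[
\OO_{X\rtimes_\gamma G}\rtimes_{\what\zeta} G\cong \OO_{(X\rtimes_\gamma G)\rtimes_{\what\gamma} G}.
\]

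Next I would connect this to $\OO_X\rtimes_\beta G$. By Katayama-type duality (the crossed product of $X\rtimes_\gamma G$ by the dual coaction $\what\gamma$ is Morita equivalent to, in fact isomorphic as a correspondence to, $X\otimes\KK(L^2(G))$ — this is the correspondence analogue of the Katayama/Imai-Takai duality used implicitly in \cite{enchilada}), the right-hand side $\OO_{(X\rtimes_\gamma G)\rtimes_{\what\gamma} G}$ is canonically isomorphic to $\OO_{X\otimes\KK(L^2(G))}\cong\OO_X\otimes\KK(L^2(G))$; the point is that tensoring a correspondence by the (full) compacts does not change the Cuntz-Pimsner algebra beyond a stabilization, because $J_{X\otimes\KK}=J_X\otimes\KK$ by \lemref{contained} (as $\KK(L^2(G))$ is exact, indeed nuclear). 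On the other side, I want to identify $\OO_{X\rtimes_\gamma G}\rtimes_{\what\zeta}G$ with $(\OO_X\rtimes_\beta G)$-stabilized, via the duality between $\beta$ on $\OO_X$ and $\what\zeta$: the whole construction of \propref{coaction} and \lemref{covariant} is functorial, so the coaction $\what\zeta$ on $\OO_{X\rtimes_\gamma G}=\OO_{X\rtimes_\gamma G}$ should be identified, under $\Pi$, with the dual coaction $\what\beta$ on $\OO_X\rtimes_\beta G$. Concretely I would check on generators that $\Pi$ intertwines $\what\beta$ and $\what\zeta$, using the explicit formulas for $\what\gamma$ on $C_c(G,X)$ and the commuting-diagram definition of $\zeta$ in \propref{coaction}; this is the bookkeeping step that makes everything fit together.

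Granting the equivariance $\Pi\circ\what\beta = \bar{\Pi\otimes\id}\circ\what\zeta$ (suitably interpreted), I would then apply $\rtimes G$ to $\Pi$ to get
\[
\Pi\rtimes G:(\OO_X\rtimes_\beta G)\rtimes_{\what\beta}G\to \OO_{X\rtimes_\gamma G}\rtimes_{\what\zeta}G\cong \OO_{X\rtimes_\gamma G}\otimes\KK(L^2(G)),
\]
and by the duality theorem for coactions (Imai-Takai) the left-hand side is $(\OO_X\rtimes_\beta G)\rtimes_{\what\beta}G\cong \OO_X\otimes\KK(L^2(G))$. Tracing through, $\Pi\rtimes G$ becomes (up to the canonical identifications) $\Pi\otimes\id_{\KK(L^2(G))}$, which I would show is an isomorphism by checking it is surjective (inherited from surjectivity of $\Pi$, which is given) and that its restriction to a corner recovers a known isomorphism — here the Gauge-Invariant Uniqueness Theorem, used exactly as in the proof of \thmref{crossed}, shows faithfulness after verifying the gauge action is preserved. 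Since $\Pi\otimes\id_{\KK}$ is injective and $\KK(L^2(G))$ is unital on the relevant corner, $\Pi$ itself is injective, hence an isomorphism.

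The main obstacle I anticipate is the identification step in the second paragraph: making precise the duality $\OO_{(X\rtimes_\gamma G)\rtimes_{\what\gamma}G}\cong\OO_X\otimes\KK(L^2(G))$ and, crucially, verifying that the composite isomorphism genuinely intertwines $\Pi\rtimes G$ with $\Pi\otimes\id$. The two-sided nature of the correspondence (one must track both the left-module equivariance via $\what\alpha$ and the bimodule maps $j_X$, $j_G$) means the diagram-chase is delicate, and one has to be careful that the Katayama duality for correspondences from \cite{enchilada} is compatible with the Cuntz-Pimsner functor of \cite{KQRCorrespondenceFunctor} — i.e. that passing to $\OO$ commutes with the relevant crossed-product stabilizations. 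An alternative, possibly cleaner route that sidesteps some of this is to argue directly: since $\Pi$ is a surjection between $C^*$-algebras carrying compatible gauge actions and $\Pi\circ i_{\OO_X}\circ k_A$ is faithful (as in \thmref{crossed}(iii)), apply the Gauge-Invariant Uniqueness Theorem to $\Pi$ directly once one knows $\OO_{X\rtimes_\gamma G}$ carries a $G$-action under which $\Pi$ is equivariant and $\what\zeta$-invariance forces faithfulness — with amenability ensuring the reduced and full crossed products coincide so that the relevant uniqueness theorem has no exactness obstruction. Whichever route is taken, amenability of $G$ enters exactly twice: once through \propref{dual coaction}/\corref{implies CP} to even define $\what\zeta$, and once to guarantee that the uniqueness-theorem argument is not defeated by nonexactness of the minimal tensor product.
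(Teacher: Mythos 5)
Your opening move is the same as the paper's: use Propositions~\ref{dual coaction} and \ref{coaction} to put a coaction $\zeta$ on $\OO_{X\rtimes_\gamma G}$, and aim to show $\Pi$ intertwines $\what\beta$ with $\zeta$. But the mechanism you propose for converting that equivariance into injectivity has a genuine gap. The paper does not form a second crossed product at all: it checks the equivariance $\bar{\Pi\otimes\id}\circ\what\beta=\zeta\circ\Pi$ on generators, notes that $\OO_{i_X,i_A}$ is injective simply because $i_A$ is (via \cite[Corollary~3.6]{KQRCorrespondenceFunctor}), and then invokes \cite[Proposition~3.1]{QuiggLandstadDuality} --- a surjection out of $\OO_X\rtimes_\beta G$ that is equivariant for $\what\beta$ and a \emph{normal} coaction on the target, and faithful on $j_{\OO_X}(\OO_X)$, is injective; amenability enters precisely to guarantee $\zeta$ is normal. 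Your route instead passes through $\OO_{X\rtimes_\gamma G}\rtimes_{\what\zeta}G\cong\OO_{(X\rtimes_\gamma G)\rtimes_{\what\gamma}G}$ and then asserts two identifications that the paper nowhere provides and that are each at least as hard as the theorem: (a) a Katayama/Imai--Takai duality for correspondences that is compatible with the Cuntz--Pimsner functor, giving $\OO_{(X\rtimes_\gamma G)\rtimes_{\what\gamma}G}\cong\OO_{X\otimes\KK(L^2(G))}$; and (b) $\OO_{X\otimes\KK(L^2(G))}\cong\OO_X\otimes\KK(L^2(G))$. For (b), \lemref{contained} only identifies the Katsura ideals $J_{X\otimes C}=J_X\otimes C$ for exact $C$, and \lemref{tensor} only produces a covariant map in one direction; the paper never establishes $\OO_{X\otimes C}\cong\OO_X\otimes C$. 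You also defer the identification of $\Pi\rtimes G$ with $\Pi\otimes\id$ as ``bookkeeping,'' but that is exactly where the content of the argument would live.

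Your alternative ``cleaner route'' is also not sound as stated: the Gauge-Invariant Uniqueness Theorem applies to representations of a Cuntz--Pimsner algebra, whereas the domain of $\Pi$ is the crossed product $\OO_X\rtimes_\beta G$, which you cannot treat as a Cuntz--Pimsner algebra without already knowing the conclusion. (GIUT does appear in the paper, but only inside the proof of \thmref{crossed}, to show $\Pi\circ j_{\OO_X}$ is faithful there; in \thmref{injective} the corresponding faithfulness of $\OO_{i_X,i_A}$ comes for free from injectivity of $i_A$.) To repair your proof, replace the double-crossed-product detour with the direct appeal to normality of $\zeta$ and \cite[Proposition~3.1]{QuiggLandstadDuality}, and carry out the generator-by-generator equivariance computation for $X$, $A$, and $C_0(G)$ that you correctly identified as the key verification.
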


\begin{proof}
By Propositions~\ref{dual coaction} and \ref{coaction}
we get a coaction $\zeta$ of $G$ on $\OO_{X\rtimes_\gamma G}$.
Our strategy is to show that $\Pi$ is $\what\beta-\zeta$ equivariant
and that $\OO_{i_X,i_A}$ is injective,
and then \cite[Proposition~3.1]{QuiggLandstadDuality} will imply that $\Pi$ is injective, because by amenability of $G$ the coaction $\zeta$ is automatically normal.

We check the equivariance condition
\[
\zeta\circ\Pi=\bar{\Pi\otimes\id}\circ\what\beta
\]
separately on generators from $X$, $A$, and $G$: for $X$ we have
\begin{align*}
\bar{\zeta\circ\Pi}\circ i_{\OO_X}\circ k_X
&=\bar\zeta\circ\bar\Pi\circ i_{\OO_X}\circ k_X
\\&=\bar\zeta\circ\OO_{i_X,k_A}\circ k_X
\\&=\bar\zeta\circ\bar{k_{X\rtimes_\gamma G}}\circ i_X
\\&=\bar{k_{X\rtimes_\gamma G}\otimes\id}\circ\bar{\what\gamma}\circ i_X
\\&=\bar{k_{X\rtimes_\gamma G}\otimes\id}\circ(i_X\otimes 1)
\\&=\bigl(\bar{k_{X\rtimes_\gamma G}}\circ i_X\bigr)\otimes 1
\\&=\OO_{i_X,i_A}\circ k_X\otimes 1
\\&=(\OO_{i_X,i_A}\otimes 1)\circ k_X
\\&=(\bar\Pi\circ i_{\OO_X}\otimes 1)\circ k_X
\\&=\bar{\Pi\otimes\id}\circ(i_{\OO_X}\otimes 1)\circ k_X
\\&=\bar{\Pi\otimes\id}\circ\bar{\what\beta}\circ i_{\OO_X}\circ k_X
\\&=\bar{\bar{\Pi\otimes\id}\circ\what\beta}\circ i_{\OO_X}\circ k_X.
\end{align*}
The verification for generators from $A$ is parallel, using $k_A,i_A,\what\alpha$ instead of $k_X,i_X,\what\gamma$.

For generators from $G$ we have
\begin{align*}
\bar{\zeta\circ\Pi}\circ i_G^{\OO_X}
&=\bar\zeta\circ\bar\Pi\circ i_G^{\OO_X}
\\&=\bar\zeta\circ u
\\&=\bar\zeta\circ \bar{k_{A\rtimes_\alpha G}}\circ i_G^A
\\&=\bar{\zeta\circ k_{A\rtimes_\alpha G}}\circ i_G^A
\\&=\bar{\bar{k_{A\rtimes_\alpha G}\otimes\id}\circ \what\alpha}\circ i_G^A
\\&=\bar{k_{A\rtimes_\alpha G}\otimes\id}\circ\bar{\what\alpha}\circ i_G^A
\\&=\bar{k_{A\rtimes_\alpha G}\otimes\id}\circ\bar{i_G^A\otimes\id}\circ\delta_G
\\&=\bar{\bar{k_{A\rtimes_\alpha G}}\circ i_G^A\otimes\id}\circ\delta_G
\\&=\bar{u\otimes\id}\circ\delta_G
\\&=\bar{\bar\Pi\circ i_G^{\OO_X}\otimes\id}\circ\delta_G
\\&=\bar{\Pi\otimes\id}\circ\bar{i_G^{\OO_X}\otimes\id}\circ\delta_G
\\&=\bar{\Pi\otimes\id}\circ\bar{\what\beta}\circ i_G^{\OO_X}
\\&=\bar{\bar{\Pi\otimes\id}\circ\what\beta}\circ i_G^{\OO_X}.
\end{align*}

Finally, by \cite[Corollary~3.6]{KQRCorrespondenceFunctor}, $\OO_{i_X,i_A}$ is injective because $i_A$ is.
\end{proof}


\begin{thebibliography}{EKQR06}

\bibitem[BS89]{BaajSkandalis}
S.~Baaj and G.~Skandalis, \emph{{$C^*$-alg{\`e}bres de Hopf et th{\'e}orie de
  Kasparov {\'e}quivariante}}, K-Theory \textbf{2} (1989), 683--721.

\bibitem[DCES79]{DeCanniereEnockSchwartz}
J.~De~Canni{\`e}re, M.~Enock, and J.-M. Schwartz, \emph{Alg\`ebres de {F}ourier
  associ\'ees \`a une alg\`ebre de {K}ac}, Math. Ann. \textbf{245} (1979),
  no.~1, 1--22.

\bibitem[DKQar]{dkq}
V.~Deaconu, A.~Kumjian, and J.~Quigg, \emph{Group actions on topological
  graphs}, Ergodic Theory Dynam. Systems \textbf{32} (2012), 1527--1566.

\bibitem[EKQR06]{enchilada}
S.~Echterhoff, S.~Kaliszewski, J.~Quigg, and I.~Raeburn, \emph{{A Categorical
  Approach to Imprimitivity Theorems for C*-Dynamical Systems}}, vol. 180, Mem.
  Amer. Math. Soc., no. 850, American Mathematical Society, Providence, RI,
  2006.

\bibitem[HN08]{HN}
G.~Hao and C.-K. Ng, \emph{Crossed products of {$C^*$}-correspondences by
  amenable group actions}, J. Math. Anal. Appl. \textbf{345} (2008), no.~2,
  702--707.

\bibitem[KQR]{KQRCorrespondenceFunctor}
S.~Kaliszewski, J.~Quigg, and D.~Robertson, \emph{Functoriality of Cuntz-{P}imsner
  correspondence maps}, \url{arXiv:1204.5820}.

\bibitem[Kat84]{kat}
Y.~Katayama, \emph{{Takesaki's duality for a non-degenerate co-action}}, Math.
  Scand. \textbf{55} (1984), 141--151.

\bibitem[Kat04]{KatsuraCorrespondence}
T.~Katsura, \emph{On {$C^*$}-algebras associated with {$C^*$}-correspondences},
  J. Funct. Anal. \textbf{217} (2004), no.~2, 366--401.

\bibitem[Kir77]{KirchbergHopf}
E.~Kirchberg, \emph{Representations of coinvolutive {H}opf-{$W\sp*$}-algebras
  and non-{A}belian duality}, Bull. Acad. Polon. Sci. S\'er. Sci. Math.
  Astronom. Phys. \textbf{25} (1977), no.~2, 117--122.

\bibitem[LPRS87]{lprs}
M.~B. Landstad, J.~Phillips, I.~Raeburn, and C.~E. Sutherland,
  \emph{{Representations of crossed products by coactions and principal
  bundles}}, Trans. Amer. Math. Soc. \textbf{299} (1987), 747--784.

\bibitem[NT79]{NakagamiTakesaki}
Y.~Nakagami and M.~Takesaki, \emph{Duality for crossed products of von
  {N}eumann algebras}, Lecture Notes in Mathematics, vol. 731, Springer,
  Berlin, 1979.

\bibitem[Pim97]{Pi}
M.~V. Pimsner, \emph{A class of {$C^*$}-algebras generalizing both
  {C}untz-{K}rieger algebras and crossed products by {${\bf Z}$}}, Free
  probability theory ({W}aterloo, {ON}, 1995), Fields Inst. Commun., vol.~12,
  Amer. Math. Soc., Providence, RI, 1997, pp.~189--212.

\bibitem[Qui86]{QuiggDualityTwisted}
J.~C. Quigg, \emph{{Duality for reduced twisted crossed products of
  $C^*$-algebras}}, Indiana Univ. Math. J. \textbf{35} (1986), 549--571.

\bibitem[Qui92]{QuiggLandstadDuality}
\bysame, \emph{{Landstad duality for $C^*$-coactions}}, Math. Scand.
  \textbf{71} (1992), 277--294.

\bibitem[RW98]{tfb}
I.~Raeburn and D.~P. Williams, \emph{{Morita equivalence and continuous-trace
  $C^*$-algebras}}, Math. Surveys and Monographs, vol.~60, American
  Mathematical Society, Providence, RI, 1998.

\end{thebibliography}

\providecommand{\bysame}{\leavevmode\hbox to3em{\hrulefill}\thinspace}
\providecommand{\MR}{\relax\ifhmode\unskip\space\fi MR }
\providecommand{\MRhref}[2]{%
  \href{http://www.ams.org/mathscinet-getitem?mr=#1}{#2}
}
\providecommand{\href}[2]{#2}

\end{document}